\documentclass[oneside,reqno,english]{amsart}
\usepackage[T1]{fontenc}
\usepackage[utf8]{inputenc}
\usepackage{xcolor}
\usepackage{babel}
\usepackage{prettyref}
\usepackage{amstext}
\usepackage{amsthm}
\usepackage{amssymb}
\usepackage[pdfusetitle,
 bookmarks=true,bookmarksnumbered=false,bookmarksopen=false,
 breaklinks=false,pdfborder={0 0 0},pdfborderstyle={},backref=false,colorlinks=false]
 {hyperref}
\hypersetup{
 colorlinks=true,citecolor=blue,linkcolor=blue,linktocpage=true}

\makeatletter
\numberwithin{equation}{section}
\numberwithin{figure}{section}

\usepackage{prettyref}

\newrefformat{cor}{Corollary~\ref{#1}}
\newrefformat{subsec}{Section~\ref{#1}}
\newrefformat{lem}{Lemma~\ref{#1}}
\newrefformat{thm}{Theorem~\ref{#1}}
\newrefformat{sec}{Section~\ref{#1}}
\newrefformat{chap}{Chapter~\ref{#1}}
\newrefformat{prop}{Proposition~\ref{#1}}
\newrefformat{exa}{Example~\ref{#1}}
\newrefformat{tab}{Table~\ref{#1}}
\newrefformat{rem}{Remark~\ref{#1}}
\newrefformat{def}{Definition~\ref{#1}}
\newrefformat{fig}{Figure~\ref{#1}}
\newrefformat{claim}{Claim~\ref{#1}}
\newrefformat{assu}{Assumption~\ref{#1}}

\makeatother

\theoremstyle{plain}
\newtheorem{thm}{\protect\theoremname}[section]
\theoremstyle{definition}
\newtheorem{defn}[thm]{\protect\definitionname}
\theoremstyle{plain}
\newtheorem{lem}[thm]{\protect\lemmaname}
\newtheorem{cor}[thm]{\protect\corollaryname}
\newtheorem{prop}[thm]{\protect\propositionname}
\theoremstyle{remark}
\newtheorem{rem}[thm]{\protect\remarkname}
\newtheorem*{rem*}{\protect\remarkname}
\theoremstyle{definition}
\newtheorem{example}[thm]{\protect\examplename}
\providecommand{\corollaryname}{Corollary}
\providecommand{\definitionname}{Definition}
\providecommand{\examplename}{Example}
\providecommand{\lemmaname}{Lemma}
\providecommand{\propositionname}{Proposition}
\providecommand{\remarkname}{Remark}
\providecommand{\theoremname}{Theorem}

\begin{document}
\subjclass[2020]{Primary 47A20; Secondary 42A82, 47A60, 47D06.}
\title[]{Compression Covariance and Tangent kernels}
\begin{abstract}
Let $A\geq0$ be self-adjoint on a Hilbert space $H$, let $T_{t}=e^{-tA}$,
and let $P$ be an orthogonal projection. Relative to the decomposition
$H=PH\oplus P^{\perp}H$, write 
\[
T_{t}=\begin{pmatrix}C_{t} & V^{*}_{t}\\
V_{t} & D_{t}
\end{pmatrix},
\]
where $C_{t}=PT_{t}P|_{PH}$, $V_{t}=P^{\perp}T_{t}P|_{PH}$, and
$D_{t}=P^{\perp}T_{t}P^{\perp}|_{P^{\perp}H}$. The compressed family
$\left(C_{t}\right)$ consists of positive contractions but need not
form a semigroup. Its defect is given by 
\[
C_{s+t}-C_{s}C_{t}=V^{*}_{s}V_{t}
\]
while the complementary block satisfies 
\[
D_{s+t}-D_{s}D_{t}=V_{s}V^{*}_{t}.
\]
Thus the failure of $\left\{ C_{t}\right\} $ and $\left\{ D_{t}\right\} $
to be semigroups gives two Gram kernels associated with the same off-diagonal
maps. We treat these covariance defects as positive definite operator-valued
kernels and use their Kolmogorov spaces to recover the hidden dynamics
they encode. We then study short-time rescalings of $E_{s,t}:=V^{*}_{s}V_{t}$.
The tangent kernel 
\[
F\left(s,t\right):=\lim_{\varepsilon\downarrow0}a\left(\varepsilon\right)^{-1}E_{\varepsilon s,\varepsilon t}
\]
has its own Kolmogorov space, and the lower-right block dynamics induces
a positive self-adjoint contraction semigroup on it. The representing
vectors of $F$ then satisfy an additive cocycle identity for this
semigroup. This gives an intrinsic restriction on the positive kernels
that can arise as short-time compression covariance tangents.
\end{abstract}

\author{James Tian}
\address{(James Tian) Mathematical Reviews, 535 W. William St, Suite 210, Ann
Arbor, MI 48103, USA}
\email{james.ftian@gmail.com}
\keywords{compressed semigroups; compression covariance kernels; positive operator-valued
kernels; leakage maps; Kolmogorov spaces; block operator dynamics;
short-time asymptotics; cocycle kernels; tangent kernels; self-adjoint
contraction semigroups.}

\maketitle
\tableofcontents{}

\section{Introduction}\label{sec:1}

The compressed family $\left(C_{t}\right)_{t\geq0}$ is the evolution
seen from the observed subspace $PH$. It consists of positive contractions,
but compression usually destroys the semigroup law. The quantity 
\[
E_{s,t}=C_{s+t}-C_{s}C_{t}
\]
measures the part of the full evolution which is lost from the observed
subspace during one time interval and then contributes again after
further evolution. The block identity 
\[
E_{s,t}=V^{*}_{s}V_{t}
\]
shows that this loss has a positive covariance form. It is the Gram
kernel of the hidden components $V_{t}h$ produced by the compressed
evolution.

The same off-diagonal maps also produce a covariance on the hidden
side. The lower-right block satisfies 
\[
D_{s+t}-D_{s}D_{t}=V_{s}V^{*}_{t}.
\]
Thus the visible loss of semigroup structure and the hidden covariance
are paired from the beginning. They are the two Gram pictures associated
with the same maps from the observed subspace into $P^{\perp}H$.
This is the basic reason for treating $E$ as a covariance kernel,
not just as a remainder term.

\prettyref{sec:2} develops the elementary kernel structure of this
object. The identity $E_{s,t}=V^{*}_{s}V_{t}$ gives positivity, while
the semigroup law gives a covariance identity relating $E_{r+s,t}$,
$E_{s,t}$, and $E_{r,s+t}$. We also include a converse dilation
statement. Starting from a strongly continuous self-adjoint family
$\left(C_{t}\right)$, positivity of the Hankel kernel $\left(s,t\right)\mapsto C_{s+t}$
builds the dilation space, while positivity of the shifted difference
\[
\left(s,t\right)\mapsto C_{s+t}-C_{s+t+2r}
\]
makes the time-shift contractive. This gives a self-adjoint contraction
semigroup whose compression is the given family.

\prettyref{sec:3} studies the two-sided block dynamics. The space
generated by the leakage maps, 
\[
\overline{span}\left\{ V_{t}h:t\geq0,\ h\in PH\right\} ,
\]
reduces the lower-right block dynamics. Under the Kolmogorov identification
for $E$, the restriction of $D_{s}$ to this generated space is determined
by $C$ and $E$. In a minimal representation, the whole hidden summand
is recovered this way. Thus the compression covariance determines
the hidden space seen by the observed dynamics.

The same section also treats short-time behavior for bounded generators.
If 
\[
A=\begin{pmatrix}B & \Gamma^{*}\\
\Gamma & L
\end{pmatrix},
\]
then the first quadratic coefficient of the visible covariance is
$\Gamma^{*}\Gamma$, and the corresponding coefficient on the hidden
side is $\Gamma\Gamma^{*}$. For a finite block decomposition with
prescribed block support, this leading-order calculation refines into
a distance expansion. A hidden block at graph distance $r$ from the
observed block can first appear at order $s^{r}t^{r}$, unless the
corresponding coefficient vanishes. 

The path case includes finite Jacobi matrices, where the order at
which each site appears is governed by its distance from the observed
site. Finally, the same path structure has an intrinsic form: starting
from $PH$, the increasing Krylov subspaces 
\[
\mathcal{K}_{n}=\overline{span}\left\{ A^{j}PH:0\leq j\leq n\right\} 
\]
produce orthogonal layers on which $A$ is block tridiagonal. Thus
the distance expansion is not only a feature of a preassigned graph;
it is also the canonical block structure generated by the observed
subspace itself.

\prettyref{sec:4} passes from finite-order expansions to short-time
tangent kernels. For a scale function $a\left(\varepsilon\right)$,
we study limits 
\[
a\left(\varepsilon\right)^{-1}E_{\varepsilon s,\varepsilon t}\to F\left(s,t\right).
\]
The main point is that the lower-right block dynamics survives this
rescaling. On the Kolmogorov space of $F$, the formula 
\[
S_{r}w_{t}h=w_{r+t}h-w_{r}h
\]
defines a positive self-adjoint contraction semigroup. Hence leakage
tangents are cocycle kernels in an intrinsic sense. This gives kernel
conditions beyond positivity, including the shifted-increment inequalities
$0\leq F^{[r]}\leq F$ and the corresponding transport identity. We
also describe spectral cocycle models, including fractional kernels
of the form 
\[
s^{\beta}+t^{\beta}-\left(s+t\right)^{\beta},\qquad0<\beta<1,
\]
arising from singular spectral measures.

\prettyref{sec:5} returns to the hidden covariance at the tangent
level. The lower-right covariance usually does not have a direct operator-kernel
limit on the original hidden space. After testing against vectors
measured at the tangent scale, however, it has a natural limit on
the Kolmogorov space of $F$. This produces an intrinsic hidden-side
tangent covariance on the same space that carries the semigroup $\left(S_{r}\right)$.
The resulting identities show that hidden increments are transported
by $S_{r}$, and that the visible shifted-increment kernels form a
decreasing hierarchy.

\subsection*{Literature context}

The construction is close to several familiar themes, although the
emphasis here is different. In dilation theory and semigroup theory,
compressed evolutions are often studied through the larger space on
which the original semigroup acts. The covariance kernel considered
here keeps track of the part of this dilation which is lost under
compression and later returns to the observed subspace. In this sense
it gives a positive kernel form of the non-semigroup part of the compressed
dynamics. Related objects also occur in system theory and product
system theory, where off-diagonal blocks, covariance data, and cocycles
encode input, output, and transfer information \cite{MR987590,MR1047579,MR2065240}.
Here the same block entries are used to produce operator-valued covariance
kernels and their tangent limits.

There is also a parallel with stochastic analysis. Covariance kernels
of Gaussian processes are positive kernels, and increments or conditional
increments often carry additional structure beyond positivity. The
kernel $E_{s,t}=V^{*}_{s}V_{t}$ may be read as the covariance of
the hidden part of the evolution seen from the observed subspace.
Under short-time rescaling, the limiting kernel is not an arbitrary
covariance kernel. It carries a semigroup on its kernel space, so
the limiting covariance has the form of an additive cocycle. This
is analogous to situations where covariance structures retain a Markovian,
orthogonal-dynamics, or cocycle-type structure, even though the object
is described through its covariance \cite{MR343815,MR343816,MR2154727,PhysRevE.99.062118,MR4396389,MR4966227}.

A related point of view comes from partial differential equations
and Dirichlet form theory. Compressing a semigroup to a subspace is
a Hilbert space analogue of observing an evolution only on part of
its state space or through a restricted class of data. The difference
between the compressed evolution and a true semigroup encodes an exchange
with hidden variables. Short-time asymptotics of this exchange are
sensitive to how the generator couples the observed component to the
unobserved one. In finite block models this coupling is reflected
in graph distance. In more singular situations one expects non-integer
scales, and the tangent formalism isolates the structure which any
such scale limit must satisfy. Boundary semigroups, Dirichlet-to-Neumann
operators, singular diffusions, and boundary representations of forms
give related settings in which observed and hidden components interact
through a semigroup or form-theoretic construction \cite{MR2823661,MR2912743,MR4166804,MR4523575,MR4907186,MR4116717,MR4997328,MR2906858}.

The positive kernel formulation is also related to reproducing kernel
Hilbert spaces and Kolmogorov decompositions of operator-valued kernels.
The kernel space associated with $E$ identifies the hidden space
generated by the leakage maps, while the kernel space associated with
a tangent $F$ carries the limiting hidden semigroup. Thus the paper
uses positive kernel methods not only to represent a covariance, but
also to recover the residual dynamics which remains after compression
and short-time rescaling \cite{MR2265340,MR3027907,MR3620761}.

Fractional kernels also appear among the model cocycle kernels. In
\prettyref{sec:4}, kernels of the form $s^{\beta}+t^{\beta}-\left(s+t\right)^{\beta}$,
with $0<\beta<1$, are obtained from singular spectral measures with
density proportional to $x^{-1-\beta}dx$. Thus fractional powers
and singular spectral measures provide natural examples of the tangent
objects studied here. This is related to the literature on nonlocal
Dirichlet forms, fractional Laplacians, fractional boundary operators,
and generalized fractional derivatives \cite{MR2465826,MR2677618,MR3050510,MR3168912,MR3318251,MR3613319,MR3920522,MR4585119,MR4779596}.
The present paper does not attempt to realize all such spectral cocycles
as tangents of prescribed pre-limit generators.

\section{Compression covariances}\label{sec:2}

We begin with the basic kernel attached to a compressed self-adjoint
semigroup. The compression itself is a family of positive contractions
and need not satisfy the semigroup law. The failure of the semigroup
law, however, has a canonical positive form: it is the Gram kernel
of the off-diagonal part of the original semigroup. This section gives
the elementary identities behind this fact and an intrinsic dilation
criterion for those families that arise as compressions.

Let $H$ be a Hilbert space, let $A\geq0$ be self-adjoint on $H$,
and let 
\[
T_{t}=e^{-tA},\qquad t\geq0,
\]
be the associated self-adjoint contraction semigroup. Let $P$ be
an orthogonal projection on $H$, and put 
\[
K=PH,\qquad P^{\perp}=I-P.
\]
Define 
\[
C_{t}=PT_{t}P|_{K},\qquad t\geq0.
\]
Thus, for $h\in K$, 
\[
C_{t}h=PT_{t}h.
\]

\begin{defn}
\label{def:2-1} For $s,t\geq0$, define 
\begin{equation}
E_{s,t}:=C_{s+t}-C_{s}C_{t}\in B\left(K\right).\label{eq:2-1}
\end{equation}
We call $E=\left(E_{s,t}\right)_{s,t\geq0}$ the compression covariance
kernel associated with $\left(A,P\right)$. 
\end{defn}

The identity \prettyref{eq:2-2} below gives a basic interpretation
of $E_{s,t}$. The map $V_{t}:=P^{\perp}T_{t}P|_{K}$ is the part
of the evolved vector that has left $K$. Thus $E$ is the Gram kernel
of the leakage maps, i.e., 
\[
E_{s,t}=V^{*}_{s}V_{t}.
\]
It measures the part of $T_{t}h$ that is outside $K$ after the first
time interval, but which later contributes back to the final observation
in $K$ after evolving for time $s$.
\begin{lem}
\label{lem:2-2} For all $s,t\geq0$, 
\begin{equation}
E_{s,t}=PT_{s}P^{\perp}T_{t}P|_{K}.\label{eq:2-2}
\end{equation}
The kernel $E$ is positive as a $B\left(K\right)$-valued kernel.
For all $r,s,t\geq0$, 
\begin{equation}
E_{r+s,t}=C_{r}E_{s,t}+E_{r,s+t}-E_{r,s}C_{t}.\label{eq:2-3}
\end{equation}
Also, 
\begin{equation}
E_{s,t}=\left(I_{K}-C_{s}\right)+\left(I_{K}-C_{t}\right)-\left(I_{K}-C_{s+t}\right)-\left(I_{K}-C_{s}\right)\left(I_{K}-C_{t}\right).\label{eq:2-4}
\end{equation}
\end{lem}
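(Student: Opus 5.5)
The plan is to derive everything from the semigroup law $T_{s+t}=T_{s}T_{t}$ and the splitting $I=P+P^{\perp}$, using self-adjointness of each $T_{t}$ only for positivity.

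\emph{Formula \eqref{eq:2-2}.} For $h\in K$ we write
\[
C_{s+t}h=PT_{s}T_{t}h=PT_{s}(P+P^{\perp})T_{t}h=C_{s}C_{t}h+PT_{s}P^{\perp}T_{t}h .
\]
Subtracting $C_{s}C_{t}h$ gives \eqref{eq:2-2}.

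\emph{Positivity.} Since $P^{\perp}$ is a projection and $T_{s}^{*}=T_{s}$, we have
\[
PT_{s}P^{\perp}T_{t}P=(P^{\perp}T_{s}P)^{*}(P^{\perp}T_{t}P),
\]
that is, $E_{s,t}=V_{s}^{*}V_{t}$ with $V_{t}=P^{\perp}T_{t}P|_{K}$. Now take finitely many $t_{i}$ and $h_{i}\in K$. Then
\[
\sum_{i,j}\langle h_{i},E_{t_{i},t_{j}}h_{j}\rangle=\Big\|\sum_{j}V_{t_{j}}h_{j}\Big\|^{2}\geq0,
\]
which is exactly positive definiteness of $E$ as a $B(K)$-valued kernel.

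\emph{Covariance identity \eqref{eq:2-3}.} Here I will expand each term using only the definition \eqref{eq:2-1}:
\[
C_{r}E_{s,t}=C_{r}C_{s+t}-C_{r}C_{s}C_{t},\qquad E_{r,s+t}=C_{r+s+t}-C_{r}C_{s+t},\qquad E_{r,s}C_{t}=C_{r+s}C_{t}-C_{r}C_{s}C_{t}.
\]
Combining, $C_{r}E_{s,t}+E_{r,s+t}-E_{r,s}C_{t}=C_{r+s+t}-C_{r+s}C_{t}$. This is $E_{r+s,t}$ by definition, so \eqref{eq:2-3} is a purely algebraic telescoping consequence of \eqref{eq:2-1}.

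\emph{Identity \eqref{eq:2-4}.} Expand $(I-C_{s})(I-C_{t})=I-C_{s}-C_{t}+C_{s}C_{t}$. Substituting into the right-hand side, the identity operators and the linear terms $C_{s},C_{t}$ cancel, leaving $C_{s+t}-C_{s}C_{t}=E_{s,t}$.

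None of the steps is a real obstacle. The only point needing care is the positivity step, which is where self-adjointness of $T_{s}$ is used to identify $PT_{s}P^{\perp}$ with $V_{s}^{*}$. The remaining identities are routine algebra.
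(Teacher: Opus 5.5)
Your proposal is correct and follows the paper's proof essentially step for step: the same subtraction for \eqref{eq:2-2}, the same factorization $E_{s,t}=V_s^{*}V_t$ via self-adjointness for positivity, and the same purely algebraic verification of \eqref{eq:2-3} and \eqref{eq:2-4}. The only cosmetic differences are that the paper gets \eqref{eq:2-3} by expanding $C_{r+s+t}$ under the two groupings, and gets \eqref{eq:2-4} by substituting $L_t=I_K-C_t$; your direct term-by-term expansion amounts to the same computation.
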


\begin{proof}
Let $h\in K$. Since $Ph=h$, 
\[
C_{s+t}h=PT_{s+t}h=PT_{s}T_{t}h
\]
and 
\[
C_{s}C_{t}h=C_{s}\left(PT_{t}h\right)=PT_{s}PT_{t}h.
\]
Subtracting gives 
\[
E_{s,t}h=PT_{s}\left(I-P\right)T_{t}h=PT_{s}P^{\perp}T_{t}h,
\]
which proves \prettyref{eq:2-2}.

For $t\geq0$, define 
\[
V_{t}:K\to P^{\perp}H,\qquad V_{t}h=P^{\perp}T_{t}h.
\]
Then $V_{t}=P^{\perp}T_{t}P$ as an operator on $H$. Since $T_{t}=T^{*}_{t}$,
\[
V^{*}_{s}=\left(P^{\perp}T_{s}P\right)^{*}=PT_{s}P^{\perp},
\]
and hence 
\[
E_{s,t}=V^{*}_{s}V_{t}.
\]
Therefore, for $s_{1},\ldots,s_{n}\geq0$ and $h_{1},\ldots,h_{n}\in K$,
\[
\sum^{n}_{i,j=1}\left\langle h_{i},E_{s_{i},s_{j}}h_{j}\right\rangle =\Big\Vert\sum^{n}_{j=1}V_{s_{j}}h_{j}\Big\Vert^{2}\geq0.
\]
Thus $E$ is positive.

By \prettyref{eq:2-1}, 
\[
C_{a+b}=C_{a}C_{b}+E_{a,b}.
\]
Applying this identity to $C_{r+s+t}$ with the groupings $\left(r+s\right)+t$
and $r+\left(s+t\right)$ gives 
\[
\left(C_{r}C_{s}+E_{r,s}\right)C_{t}+E_{r+s,t}=C_{r}\left(C_{s}C_{t}+E_{s,t}\right)+E_{r,s+t}.
\]
After cancelling $C_{r}C_{s}C_{t}$, we obtain \prettyref{eq:2-3}.

Finally, set $L_{t}=I_{K}-C_{t}$. Since $C_{t}=I_{K}-L_{t}$, 
\[
E_{s,t}=\left(I_{K}-L_{s+t}\right)-\left(I_{K}-L_{s}\right)\left(I_{K}-L_{t}\right)=L_{s}+L_{t}-L_{s+t}-L_{s}L_{t},
\]
which is \prettyref{eq:2-4}. 
\end{proof}

The ideas in the next result are classical; see e.g., \cite{MR51437,MR2760647,MR2938971},
and the recent survey \cite{MR4250453}. The relevant intrinsic object
here is the Hankel kernel \prettyref{eq:2-5-0}. Its positivity gives
the Kolmogorov space, while the positivity of \prettyref{eq:2-5-1}
is the condition that the shift $\left[t,h\right]\mapsto\left[t+r,h\right]$
is contractive in that space.
\begin{thm}
\label{thm:2-3} Let $K$ be a Hilbert space, and let $\left(C_{t}\right)_{t\geq0}\subset B\left(K\right)$
be a strongly continuous family such that $C_{0}=I_{K}$ and $C_{t}=C^{*}_{t}$
for all $t\geq0$. Assume that the kernel 
\begin{equation}
\left(s,t\right)\mapsto C_{s+t}\label{eq:2-5-0}
\end{equation}
is positive as a $B\left(K\right)$-valued kernel. Assume also that,
for every $r\geq0$, the kernel 
\begin{equation}
\left(s,t\right)\mapsto C_{s+t}-C_{s+t+2r}\label{eq:2-5-1}
\end{equation}
is positive as a $B\left(K\right)$-valued kernel.

Then there exist a Hilbert space $H$, an isometry $W:K\to H$, and
a strongly continuous self-adjoint contraction semigroup $\left(T_{t}\right)_{t\geq0}$
on $H$ such that 
\begin{equation}
C_{t}=W^{*}T_{t}W\label{eq:2-5}
\end{equation}
for all $t\geq0$. After identifying $K$ with $WK$ and writing $P$
for the orthogonal projection of $H$ onto $K$, 
\begin{equation}
C_{t}=PT_{t}P|_{K}\label{eq:2-6}
\end{equation}
for all $t\geq0$. The representation may be chosen minimal in the
sense that 
\begin{equation}
H=\overline{span}\left\{ T_{t}Wh:t\geq0,\ h\in K\right\} .\label{eq:2-7}
\end{equation}
For this representation, 
\begin{equation}
C_{s+t}-C_{s}C_{t}=PT_{s}P^{\perp}T_{t}P|_{K}\label{eq:2-8}
\end{equation}
for all $s,t\geq0$. 
\end{thm}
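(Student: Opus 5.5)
The plan is a Kolmogorov construction for the Hankel kernel, followed by contractivity of the time shift and a generator argument. On the algebraic span of symbols $\delta_{t}\otimes h$ ($t\geq0$, $h\in K$), define the sesquilinear form
\[
\Big\langle \sum_{i}\delta_{s_{i}}\otimes h_{i},\sum_{j}\delta_{t_{j}}\otimes k_{j}\Big\rangle :=\sum_{i,j}\left\langle h_{i},C_{s_{i}+t_{j}}k_{j}\right\rangle .
\]
It is Hermitian because $C_{t}=C^{*}_{t}$, and positive semidefinite by the positivity of \prettyref{eq:2-5-0}. Quotient by the null vectors and complete to get $H$, with $[t,h]$ the class of $\delta_{t}\otimes h$. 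Set $Wh=[0,h]$. Then $\|Wh\|^{2}=\langle h,C_{0}h\rangle=\|h\|^{2}$, so $W$ is an isometry.

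Next I define the shifts $T_{r}[t,h]=[t+r,h]$ on the dense span. The kernel for the shifted vectors is $(s,t)\mapsto C_{s+t+2r}$, so $\|T_{r}x\|^{2}\leq\|x\|^{2}$ is exactly the positivity of \prettyref{eq:2-5-1}. Hence $T_{r}$ is well defined on the quotient (null vectors go to null vectors) and extends to a contraction. From the formula on generators one reads off:
\begin{itemize}
\item $\langle[s,h],T_{r}[t,k]\rangle=\langle h,C_{s+t+r}k\rangle=\langle T_{r}[s,h],[t,k]\rangle$, so $T_{r}$ is self-adjoint;
\item $T_{r}T_{r'}=T_{r+r'}$ and $T_{0}=I$;
\item $W^{*}T_{t}W=C_{t}$, since $\langle Wh,T_{t}Wk\rangle=\langle h,C_{t}k\rangle$.
\end{itemize}
The minimality \prettyref{eq:2-7} holds by construction, because $[t,h]=T_{t}Wh$.

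For strong continuity, the bound $\|T_{r}\|\leq1$ lets me check it only on the vectors $[t,h]$. There
\[
\|T_{r}[t,h]-[t,h]\|^{2}=\langle h,(C_{2t+2r}-2C_{2t+r}+C_{2t})h\rangle,
\]
which tends to $0$ as $r\downarrow0$ by strong continuity of $(C_{t})$. Identifying $K$ with $WK$, the projection is $P=WW^{*}$, which gives \prettyref{eq:2-6}. Then \prettyref{eq:2-8} follows verbatim from the computation in the proof of \prettyref{lem:2-2}, which used only the semigroup law, self-adjointness and $Ph=h$.

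The main obstacle is that the statement is phrased for $T_{t}=e^{-tA}$ with $A\geq0$, so I must identify the constructed semigroup as being of that form. A strongly continuous semigroup of self-adjoint contractions $T_{t}$ satisfies $T_{t}=T_{t/2}^{2}\geq0$. By Hille--Yosida, or equivalently the spectral theorem for a one-parameter family of commuting positive contractions, its generator is $-A$ with $A\geq0$ self-adjoint. The other point needing care is well-definedness on the quotient: if $x$ is null then $\|T_{r}x\|\leq\|x\|=0$, so the contraction estimate on the raw span handles it, and the estimate itself must be applied to arbitrary finite sums rather than single generators.
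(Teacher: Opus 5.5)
Your proposal is correct and follows the paper's proof essentially step for step. The steps are the same: the Kolmogorov space of the Hankel kernel $C_{s+t}$, the isometry $Wh=[0,h]$, the shift $T_r[t,h]=[t+r,h]$ made contractive on arbitrary finite sums by positivity of $C_{s+t}-C_{s+t+2r}$, self-adjointness and the semigroup law checked on generators, minimality by construction, and \eqref{eq:2-8} by the compression computation. The only differences are minor. You check strong continuity only as $r\downarrow0$, which extends to all times by the usual contraction-semigroup argument, whereas the paper checks $r\to q$ directly. Your identification $T_t=e^{-tA}$ with $A\geq0$ is correct but not required, since the theorem only asks for a strongly continuous self-adjoint contraction semigroup; the paper just notes $T_t=T_{t/2}^2\geq0$.
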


\begin{proof}
Let $D_{0}$ be the algebraic span of symbols $\left[t,h\right]$,
where $t\geq0$ and $h\in K$. Define a sesquilinear form on $D_{0}$
by 
\[
\left\langle \sum^{n}_{i=1}\left[s_{i},h_{i}\right],\sum^{m}_{j=1}\left[t_{j},k_{j}\right]\right\rangle =\sum^{n}_{i=1}\sum^{m}_{j=1}\left\langle h_{i},C_{s_{i}+t_{j}}k_{j}\right\rangle .
\]
The positivity of \prettyref{eq:2-5-0} implies that this form is
positive semidefinite. Quotient by its null space and complete. Let
$H$ be the resulting Hilbert space. We denote the class of $\left[t,h\right]$
again by $\left[t,h\right]$.

Define $W:K\to H$ by $Wh=\left[0,h\right]$. Since $C_{0}=I_{K}$,
\[
\left\Vert Wh\right\Vert ^{2}=\left\langle h,C_{0}h\right\rangle =\left\Vert h\right\Vert ^{2}.
\]
Thus $W$ is an isometry.

For $r\geq0$, define on $D_{0}$ the map 
\[
T_{r}\left[t,h\right]=\left[t+r,h\right].
\]
Let 
\[
x=\sum^{n}_{i=1}\left[s_{i},h_{i}\right].
\]
Then 
\[
\left\Vert x\right\Vert ^{2}-\left\Vert T_{r}x\right\Vert ^{2}=\sum^{n}_{i,j=1}\left\langle h_{i},\left(C_{s_{i}+s_{j}}-C_{s_{i}+s_{j}+2r}\right)h_{j}\right\rangle .
\]
This is nonnegative by the assumed positivity \prettyref{eq:2-5-1}.
Hence $T_{r}$ is contractive on $D_{0}$. If $x$ has zero norm,
then 
\[
0\leq\left\Vert T_{r}x\right\Vert ^{2}\leq\left\Vert x\right\Vert ^{2}=0.
\]
So $T_{r}$ descends to the quotient and extends to a contraction
on $H$.

The operator $T_{r}$ is self-adjoint. On generators, 
\[
\left\langle T_{r}\left[s,h\right],\left[t,k\right]\right\rangle =\left\langle h,C_{s+r+t}k\right\rangle =\left\langle \left[s,h\right],T_{r}\left[t,k\right]\right\rangle .
\]
The semigroup law follows on generators: 
\[
T_{r}T_{q}\left[t,h\right]=\left[t+q+r,h\right]=T_{r+q}\left[t,h\right].
\]
Thus $\left(T_{t}\right)_{t\geq0}$ is a semigroup of self-adjoint
contractions.

We check strong continuity on the dense span of the vectors $\left[t,h\right]$.
For $r,q\geq0$, 
\[
\begin{aligned}\left\Vert T_{r}\left[t,h\right]-T_{q}\left[t,h\right]\right\Vert ^{2} & =\left\langle h,C_{2t+2r}h\right\rangle +\left\langle h,C_{2t+2q}h\right\rangle \\
 & \quad-\left\langle h,C_{2t+r+q}h\right\rangle -\left\langle h,C_{2t+q+r}h\right\rangle .
\end{aligned}
\]
The right side tends to $0$ as $r\to q$, since $u\mapsto C_{u}$
is strongly continuous. Since each $T_{r}$ is a contraction, strong
continuity follows on all of $H$.

For $h,k\in K$, 
\[
\left\langle h,W^{*}T_{t}Wk\right\rangle =\left\langle Wh,T_{t}Wk\right\rangle =\left\langle \left[0,h\right],\left[t,k\right]\right\rangle =\left\langle h,C_{t}k\right\rangle .
\]
Hence \prettyref{eq:2-5} holds. The form \prettyref{eq:2-6} follows
after identifying $K$ with $WK$. Minimality follows from the construction,
since $\left[t,h\right]=T_{t}Wh$.

Since $T_{t}=T^{2}_{t/2}$ and $T_{t/2}$ is self-adjoint, $T_{t}\geq0$
for $t\geq0$. Hence $\left(T_{t}\right)_{t\geq0}$ is a positive
self-adjoint contraction semigroup.

It remains to prove \prettyref{eq:2-8}. By \prettyref{eq:2-6}, 
\[
C_{s+t}=PT_{s+t}P|_{K}=PT_{s}T_{t}P|_{K}
\]
and 
\[
C_{s}C_{t}=PT_{s}PT_{t}P|_{K}.
\]
Subtracting gives 
\[
C_{s+t}-C_{s}C_{t}=PT_{s}\left(I-P\right)T_{t}P|_{K}=PT_{s}P^{\perp}T_{t}P|_{K}.
\]
This proves \prettyref{eq:2-8}. 
\end{proof}

\section{Block dynamics}\label{sec:3}

We now look more closely at the block structure behind the covariance
kernel. The compression covariance 
\[
E_{s,t}=V^{*}_{s}V_{t}
\]
is only one side of the off-diagonal dynamics. The same maps $V_{t}$
also give the hidden-side covariance 
\[
D_{s+t}-D_{s}D_{t}=V_{s}V^{*}_{t}.
\]
The first part of this section states the block identities which connect
these two kernels and shows that the compression covariance recovers
the hidden space generated by the observed dynamics.

The second part studies what these identities say at short time when
the generator is bounded. In that case the off-diagonal block of $A$
gives the first quadratic coefficient, and a finer block decomposition
gives distance-based higher-order terms. The path case leads to finite
Jacobi models, and then to an intrinsic Krylov construction in which
the subspaces generated from $PH$ produce a block tridiagonal form
without choosing a graph in advance.

\subsection{Two-sided block dynamics}

We now write the semigroup in blocks relative to the decomposition
$H=PH\oplus P^{\perp}H$. Thus 
\[
T_{t}=\begin{pmatrix}C_{t} & V^{*}_{t}\\
V_{t} & D_{t}
\end{pmatrix},
\]
where $V_{t}=P^{\perp}T_{t}P|_{PH}$ and $D_{t}=P^{\perp}T_{t}P^{\perp}|_{P^{\perp}H}$.
The semigroup identity $T_{s+t}=T_{s}T_{t}$ gives the basic relations
among these blocks. They show that the compression covariance $E_{s,t}=V^{*}_{s}V_{t}$
on $PH$ is coupled to a hidden covariance $V_{s}V^{*}_{t}$ on $P^{\perp}H$.
\begin{lem}
\label{lem:3-1} For all $s,t\geq0$, 
\begin{align}
C_{s+t} & =C_{s}C_{t}+V^{*}_{s}V_{t},\label{eq:3-1}\\
V_{s+t} & =V_{s}C_{t}+D_{s}V_{t},\label{eq:3-2}\\
D_{s+t} & =D_{s}D_{t}+V_{s}V^{*}_{t}.\label{eq:3-3}
\end{align}
\end{lem}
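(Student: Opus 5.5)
The plan is to read all three identities off the semigroup law $T_{s+t}=T_{s}T_{t}$ written in block form relative to $H=PH\oplus P^{\perp}H$. The only inputs are $P+P^{\perp}=I$ and the self-adjointness $T_{t}=T_{t}^{*}$. The latter identifies the upper-right block $PT_{t}P^{\perp}|_{P^{\perp}H}$ with $V_{t}^{*}$, exactly as in the proof of Lemma \ref{lem:2-2}.

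For \eqref{eq:3-1}, nothing new is needed. It is the definition \eqref{eq:2-1} combined with the identity $E_{s,t}=V_{s}^{*}V_{t}$ from Lemma \ref{lem:2-2}. Equivalently, insert $I=P+P^{\perp}$ between $T_{s}$ and $T_{t}$ in $PT_{s+t}P$.

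For \eqref{eq:3-2} and \eqref{eq:3-3}, I would do the same insertion in the other blocks. Take $h\in PH$. Then
\[
V_{s+t}h=P^{\perp}T_{s}(P+P^{\perp})T_{t}h=P^{\perp}T_{s}P\,(PT_{t}h)+P^{\perp}T_{s}P^{\perp}\,(P^{\perp}T_{t}h).
\]
The two terms are $V_{s}C_{t}h$ and $D_{s}V_{t}h$, which gives \eqref{eq:3-2}. Next take $k\in P^{\perp}H$. Then
\[
D_{s+t}k=P^{\perp}T_{s}PT_{t}k+P^{\perp}T_{s}P^{\perp}T_{t}k.
\]
Here $PT_{t}k=PT_{t}P^{\perp}k=V_{t}^{*}k$ by self-adjointness of $T_{t}$. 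So the first term is $V_{s}V_{t}^{*}k$, and the second term is $D_{s}D_{t}k$. This gives \eqref{eq:3-3}.

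There is no real obstacle. The only point needing care is the adjoint identification $(P^{\perp}T_{t}P)^{*}=PT_{t}P^{\perp}$, which uses $T_{t}^{*}=T_{t}$. One also has to keep track of which operators are regarded as maps $PH\to P^{\perp}H$ and which as maps back. Taking the adjoint of \eqref{eq:3-2} gives the remaining block relation $V_{s+t}^{*}=C_{t}V_{s}^{*}+V_{t}^{*}D_{s}$, so all four block entries of the semigroup law are accounted for.
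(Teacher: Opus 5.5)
Your proposal is correct and is the same argument as the paper's: the three identities are the upper-left, lower-left, and lower-right entries of $T_{s+t}=T_{s}T_{t}$ in block form. Inserting $I=P+P^{\perp}$ is exactly that block multiplication written out on vectors, and you correctly justify the identification $PT_{t}P^{\perp}|_{P^{\perp}H}=V^{*}_{t}$ by $T^{*}_{t}=T_{t}$.
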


\begin{proof}
The identities are the block entries of 
\[
T_{s+t}=T_{s}T_{t}
\]
relative to the decomposition $H=PH\oplus P^{\perp}H$: 
\[
\begin{pmatrix}C_{s+t} & V^{*}_{s+t}\\
V_{s+t} & D_{s+t}
\end{pmatrix}=\begin{pmatrix}C_{s} & V^{*}_{s}\\
V_{s} & D_{s}
\end{pmatrix}\begin{pmatrix}C_{t} & V^{*}_{t}\\
V_{t} & D_{t}
\end{pmatrix}.
\]
The upper-left entry gives \prettyref{eq:3-1}. Since $E_{s,t}:=C_{s+t}-C_{s}C_{t}$,
this also gives $E_{s,t}=V^{*}_{s}V_{t}$. The lower-left and lower-right
entries give \prettyref{eq:3-2} and \prettyref{eq:3-3}. 
\end{proof}

Thus the same off-diagonal maps $V_{t}$ generate two covariance kernels.
The first is the compression covariance 
\[
E_{s,t}=V^{*}_{s}V_{t}
\]
on $PH$. The second is the lower-right covariance 
\[
D_{s+t}-D_{s}D_{t}=V_{s}V^{*}_{t}
\]
on $P^{\perp}H$. These are the same off-diagonal terms viewed from
the two sides of the decomposition $H=PH\oplus P^{\perp}H$.

The space generated by the leakage maps is 
\[
\mathcal{L}_{E}=\overline{span}\left\{ V_{t}h:t\geq0,\ h\in PH\right\} \subseteq P^{\perp}H.
\]
This space is determined by $E$, up to the usual unitary identification
of Kolmogorov spaces.
\begin{lem}
\label{lem:3-2} For every $s\geq0$, the space $\mathcal{L}_{E}$
reduces $D_{s}$. Moreover, for all $s,t\geq0$ and $h\in PH$, 
\begin{equation}
D_{s}V_{t}h=V_{s+t}h-V_{s}C_{t}h.\label{eq:3-4}
\end{equation}
Consequently, after identifying $\mathcal{L}_{E}$ with the Kolmogorov
space of $E$, the restriction of $D_{s}$ to $\mathcal{L}_{E}$ is
determined by $C$ and $E$.
\end{lem}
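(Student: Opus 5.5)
Identity \prettyref{eq:3-4} is just a rearrangement of \prettyref{eq:3-2} from \prettyref{lem:3-1}: applying $V_{s+t}=V_{s}C_{t}+D_{s}V_{t}$ to $h\in PH$ and solving for $D_{s}V_{t}h$ gives it. The first step is to record this. The right side lies in $\mathcal{L}_{E}$, because $V_{s+t}h$ is a generator and $V_{s}(C_{t}h)$ is a generator with $C_{t}h\in PH$. So $D_{s}$ maps the spanning set of $\mathcal{L}_{E}$ into $\mathcal{L}_{E}$. By linearity and boundedness of $D_{s}$, the closed subspace $\mathcal{L}_{E}$ is invariant under $D_{s}$.

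For reduction, I need invariance of the orthogonal complement of $\mathcal{L}_{E}$ in $P^{\perp}H$ as well. Here I use self-adjointness: $D_{s}=P^{\perp}T_{s}P^{\perp}|_{P^{\perp}H}$ is self-adjoint on $P^{\perp}H$ because $T_{s}$ is. An invariant closed subspace of a self-adjoint bounded operator is reducing. Concretely, if $y\perp\mathcal{L}_{E}$ and $x\in\mathcal{L}_{E}$, then $\langle D_{s}y,x\rangle=\langle y,D_{s}x\rangle=0$. This gives the first claim.

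For the "consequently" part, take the Kolmogorov space $\mathcal{H}_{E}$ of $E$, built from symbols $[t,h]$ with inner product $\langle [s,h],[t,k]\rangle=\langle h,E_{s,t}k\rangle$. The map $[t,h]\mapsto V_{t}h$ preserves inner products because $E_{s,t}=V_{s}^{*}V_{t}$. It therefore descends to the null quotient and extends to a unitary $U:\mathcal{H}_{E}\to\mathcal{L}_{E}$, since its range is dense by the definition of $\mathcal{L}_{E}$.

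Transporting $D_{s}|_{\mathcal{L}_{E}}$ through $U$, \prettyref{eq:3-4} says the resulting operator acts on generators by
\[
[t,h]\mapsto[s+t,h]-[s,C_{t}h].
\]
This formula involves only $C$, while the inner product on $\mathcal{H}_{E}$ involves only $E$. Since $U^{*}D_{s}|_{\mathcal{L}_{E}}U$ is bounded and agrees with this formula on a dense set, the operator is determined by $C$ and $E$.

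The main obstacle is well-definedness on the abstract Kolmogorov space: a formal sum of generators with zero norm must map to a zero-norm element. The conjugation handles this automatically, because the formula is the pullback of the bounded operator $D_{s}$ under the unitary $U$, so null vectors go to null vectors. If an intrinsic check is preferred, one could instead verify directly that the formula is contractive with respect to the $E$-form using \prettyref{eq:2-3}. I would avoid that computation by relying on the unitary identification.
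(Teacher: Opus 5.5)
Your proposal is correct and follows essentially the same route as the paper. You rearrange \eqref{eq:3-2} to get \eqref{eq:3-4}, deduce invariance of the span (hence of $\mathcal{L}_{E}$), use self-adjointness of $D_{s}$ for reduction, and transport through the unitary $[t,h]\mapsto V_{t}h$ to obtain the generator formula $[t,h]\mapsto[s+t,h]-[s,C_{t}h]$. Your observation that well-definedness on the Kolmogorov space comes for free from this unitary conjugation is a point the paper leaves implicit.
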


\begin{proof}
The identity \prettyref{eq:3-4} is just \prettyref{eq:3-2} rewritten.
It implies that $D_{s}$ maps the linear span of 
\[
\left\{ V_{t}h:t\geq0,\ h\in PH\right\} 
\]
into itself. Hence $D_{s}\mathcal{L}_{E}\subseteq\mathcal{L}_{E}$.
Since $D_{s}$ is self-adjoint on $P^{\perp}H$, the closed invariant
subspace $\mathcal{L}_{E}$ reduces $D_{s}$.

To see the last assertion, let $H_{E}$ be the Kolmogorov space of
the positive kernel $E$. Thus $H_{E}$ is the completion of the span
of symbols $\left[t,h\right]$, with inner product 
\[
\left\langle \left[s,h\right],\left[t,k\right]\right\rangle =\left\langle h,E_{s,t}k\right\rangle .
\]
The map 
\[
\left[t,h\right]\mapsto V_{t}h
\]
extends to a unitary from $H_{E}$ onto $\mathcal{L}_{E}$, since
$E_{s,t}=V^{*}_{s}V_{t}$. Under this unitary, the restriction of
$D_{s}$ to $\mathcal{L}_{E}$ is given on generators by 
\[
\left[t,h\right]\mapsto\left[s+t,h\right]-\left[s,C_{t}h\right].
\]
Thus this restriction is determined by $C$ and $E$. 
\end{proof}

\begin{cor}
\label{cor:3-3} Assume that the representation $\left(H,T_{t},P\right)$
is minimal in the sense that $H=\overline{span}\left\{ T_{t}h:t\geq0,\:h\in PH\right\} $.
Then 
\[
P^{\perp}H=\mathcal{L}_{E}.
\]
Equivalently, in a minimal representation the hidden summand is the
Kolmogorov space of the compression covariance kernel. 
\end{cor}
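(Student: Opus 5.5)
The approach is to show that every vector of the form $T_{t}h$, with $t\geq0$ and $h\in PH$, splits into a piece in $PH$ and a piece in $\mathcal{L}_{E}$. The decomposition $H=PH\oplus P^{\perp}H$ gives
\[
T_{t}h=PT_{t}h+P^{\perp}T_{t}h=C_{t}h+V_{t}h .
\]
Here $C_{t}h\in PH$. The second term $V_{t}h$ lies in $\mathcal{L}_{E}$ by the definition of $\mathcal{L}_{E}$. Hence
\[
\operatorname{span}\left\{ T_{t}h:t\geq0,\ h\in PH\right\} \subseteq PH\oplus\mathcal{L}_{E}.
\]
The sum $PH\oplus\mathcal{L}_{E}$ is an orthogonal sum of two closed subspaces, because $\mathcal{L}_{E}\subseteq P^{\perp}H$. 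It is therefore closed. Taking closures and using minimality, we get $H\subseteq PH\oplus\mathcal{L}_{E}$.

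Next, apply $P^{\perp}$ to both sides. Since $P^{\perp}$ annihilates $PH$ and fixes $\mathcal{L}_{E}$, this gives $P^{\perp}H\subseteq\mathcal{L}_{E}$. The reverse inclusion $\mathcal{L}_{E}\subseteq P^{\perp}H$ holds by definition. So $P^{\perp}H=\mathcal{L}_{E}$.

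An equivalent route argues by orthogonal complements. Suppose $x\in P^{\perp}H$ is orthogonal to $\mathcal{L}_{E}$. Then for every $t\geq0$ and $h\in PH$,
\[
\langle x,T_{t}h\rangle=\langle x,C_{t}h\rangle+\langle x,V_{t}h\rangle=0+0=0,
\]
so minimality forces $x=0$.

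For the \emph{equivalently} clause, invoke the unitary $[t,h]\mapsto V_{t}h$ from $H_{E}$ onto $\mathcal{L}_{E}$, constructed in the proof of Lemma~\ref{lem:3-2}. This identifies $P^{\perp}H$ with the Kolmogorov space of $E$.

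There is no real obstacle. The only point needing care is that the orthogonal sum of the two closed subspaces is closed, which is what allows passing from the span to its closure. The orthogonal-complement argument avoids this issue entirely, and I would present that version.
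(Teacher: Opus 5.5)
Your proposal is correct, and the orthogonal-complement version you plan to present is exactly the paper's argument: if $y\in P^{\perp}H$ is orthogonal to $\mathcal{L}_{E}$, then $y$ is orthogonal to every $T_{t}h=C_{t}h+V_{t}h$, so minimality forces $y=0$. Your first route, via the closed orthogonal sum $H\subseteq PH\oplus\mathcal{L}_{E}$ followed by applying $P^{\perp}$, is an equally valid reformulation of the same idea.
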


\begin{proof}
If $y\in P^{\perp}H$ is orthogonal to $\mathcal{L}_{E}$, then 
\[
\left\langle y,V_{t}h\right\rangle =0
\]
for all $t\geq0$ and $h\in PH$. Since 
\[
T_{t}h=C_{t}h+V_{t}h,
\]
with $C_{t}h\in PH$ and $V_{t}h\in P^{\perp}H$, and since $y\in P^{\perp}H$,
we have 
\[
\left\langle y,C_{t}h\right\rangle =0.
\]
Therefore 
\[
\left\langle y,T_{t}h\right\rangle =\left\langle y,C_{t}h\right\rangle +\left\langle y,V_{t}h\right\rangle =0
\]
for all $t\geq0$ and $h\in PH$. By the minimality assumption, $y$
is orthogonal to all of $H$, so $y=0$. Thus 
\[
P^{\perp}H\ominus\mathcal{L}_{E}=\{0\},
\]
and therefore $P^{\perp}H=\mathcal{L}_{E}$. 
\end{proof}

\begin{cor}
\label{cor:3-4} The following are equivalent: 
\begin{enumerate}
\item $E_{s,t}=0$ for all $s,t\geq0$; 
\item $V_{t}=0$ for all $t\geq0$; 
\item $PH$ is invariant for $T_{t}$ for every $t\geq0$; 
\item $\left(C_{t}\right)_{t\geq0}$ is a semigroup. 
\end{enumerate}
If these conditions hold and the representation $\left(H,T_{t},P\right)$
is minimal, then $P^{\perp}H=\{0\}$. Equivalently, the minimal representation
has no hidden summand, so $H=PH$. 
\end{cor}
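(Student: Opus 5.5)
The plan is to prove the cycle of implications (1)$\Rightarrow$(2)$\Rightarrow$(3)$\Rightarrow$(4)$\Rightarrow$(1), and then to deduce the final assertion from \prettyref{cor:3-3}. All of the needed tools are already in \prettyref{lem:2-2} and \prettyref{lem:3-1}. The only step with real content is (1)$\Rightarrow$(2), and it is immediate once we use the Gram identity.

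\textbf{(1)$\Rightarrow$(2).} By \prettyref{lem:3-1}, $E_{t,t}=V^{*}_{t}V_{t}$. Hence
\[
\left\Vert V_{t}h\right\Vert ^{2}=\left\langle h,E_{t,t}h\right\rangle =0
\]
for every $h\in PH$, so $V_{t}=0$.

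\textbf{(2)$\Rightarrow$(3).} For $h\in PH$ we have $T_{t}h=C_{t}h+V_{t}h=C_{t}h\in PH$. So $PH$ is invariant under each $T_{t}$.

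\textbf{(3)$\Rightarrow$(4).} If $T_{t}PH\subseteq PH$, then $PT_{t}P=T_{t}P$. Therefore
\[
C_{s}C_{t}=PT_{s}PT_{t}P|_{PH}=PT_{s}T_{t}P|_{PH}=C_{s+t},
\]
and $\left(C_{t}\right)$ is a semigroup.

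\textbf{(4)$\Rightarrow$(1).} This is just the definition \prettyref{eq:2-1}: if $C_{s+t}=C_{s}C_{t}$ for all $s,t$, then $E_{s,t}=0$.

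For the final claim, assume the conditions hold and that the representation is minimal. By \prettyref{cor:3-3}, $P^{\perp}H=\mathcal{L}_{E}$. On the other hand, $\mathcal{L}_{E}$ is the closed span of the vectors $V_{t}h$, and these all vanish by (2). Hence $\mathcal{L}_{E}=\{0\}$, so $P^{\perp}H=\{0\}$ and $H=PH$. Equivalently, the Kolmogorov space of the zero kernel is trivial.

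I expect no genuine obstacle here. The only point needing care is that $V_{t}$ and $C_{t}$ are viewed as operators on $PH$, so the identity $PT_{t}P=T_{t}P$ in step (3)$\Rightarrow$(4) must be read as operators restricted to $PH$.
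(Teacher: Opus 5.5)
Your proof is correct and follows essentially the same route as the paper. The paper likewise uses the Gram identity $E_{t,t}=V^{*}_{t}V_{t}$ for (1)$\Rightarrow$(2), the definition $V_{t}=P^{\perp}T_{t}P|_{K}$ to link (2) and (3), and the identity $C_{s+t}=C_{s}C_{t}+V^{*}_{s}V_{t}$ for the semigroup condition (where you instead use $PT_{t}P=T_{t}P$ on $PH$), and it likewise deduces the final assertion from \prettyref{cor:3-3}.
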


\begin{proof}
The equivalence follows from $E_{s,t}=V^{*}_{s}V_{t}$, since $E_{t,t}=0$
implies $V_{t}=0$, from the definition $V_{t}=P^{\perp}T_{t}P|_{K}$,
and from the identity $C_{s+t}=C_{s}C_{t}+V^{*}_{s}V_{t}$; the final
assertion follows from \prettyref{cor:3-3}.
\end{proof}

The identities above describe the hidden component globally, through
the off-diagonal maps $V_{t}$. When the generator has bounded blocks,
these relations can also be read infinitesimally. The first-order
term of $V_{t}$ is controlled by the off-diagonal block of the generator,
and therefore the first short-time coefficient of the two covariance
kernels is obtained by squaring this coupling from the two sides.
\begin{prop}
\label{prop:3-5}Let $T_{t}=e^{-tA}$ be as above. Assume in addition
that $A\in B\left(H\right)$. Relative to $H=PH\oplus P^{\perp}H$,
write 
\[
A=\begin{pmatrix}B & \Gamma^{*}\\
\Gamma & L
\end{pmatrix},
\]
where 
\[
B=PAP|_{PH},\qquad L=P^{\perp}AP^{\perp}|_{P^{\perp}H},\qquad\Gamma=P^{\perp}AP|_{PH}.
\]
Then, in operator norm, as $\left(s,t\right)\to\left(0,0\right)$
with $s,t>0$, 
\[
\frac{1}{st}\left(C_{s+t}-C_{s}C_{t}\right)\to\Gamma^{*}\Gamma\tag{3.5}
\]
and 
\[
\frac{1}{st}\left(D_{s+t}-D_{s}D_{t}\right)\to\Gamma\Gamma^{*}.\tag{3.6}
\]
\end{prop}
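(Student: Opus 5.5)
By \prettyref{lem:3-1}, $C_{s+t}-C_{s}C_{t}=V^{*}_{s}V_{t}$ and $D_{s+t}-D_{s}D_{t}=V_{s}V^{*}_{t}$. Both limits therefore follow once we know the first-order behavior of the leakage map $V_{t}=P^{\perp}T_{t}P|_{PH}$ in operator norm. The claim to prove is
\[
\frac{1}{t}V_{t}\to-\Gamma\qquad\text{in operator norm as }t\downarrow0 .
\]

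Since $A$ is bounded, $T_{t}=e^{-tA}$ is given by a norm-convergent power series, so
\[
T_{t}=I-tA+R_{t},\qquad\left\Vert R_{t}\right\Vert \leq\sum_{n\geq2}\frac{t^{n}\left\Vert A\right\Vert ^{n}}{n!}\leq\tfrac{1}{2}t^{2}\left\Vert A\right\Vert ^{2}e^{t\left\Vert A\right\Vert }.
\]
We compress this by $P^{\perp}(\cdot)P$. The identity contributes nothing, because $P^{\perp}P=0$. The first-order term contributes $-tP^{\perp}AP|_{PH}=-t\Gamma$. Hence
\[
V_{t}=-t\Gamma+P^{\perp}R_{t}P|_{PH},\qquad\left\Vert V_{t}/t+\Gamma\right\Vert =O(t).
\]
In particular $\left\Vert V_{t}/t\right\Vert $ stays bounded for small $t$.

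Now write, for $s,t>0$,
\[
\frac{1}{st}V^{*}_{s}V_{t}=\Bigl(\frac{V_{s}}{s}\Bigr)^{*}\Bigl(\frac{V_{t}}{t}\Bigr),
\qquad
\frac{1}{st}V_{s}V^{*}_{t}=\Bigl(\frac{V_{s}}{s}\Bigr)\Bigl(\frac{V_{t}}{t}\Bigr)^{*}.
\]
Taking adjoints is isometric in operator norm, and multiplication is jointly norm-continuous on bounded sets. Since $V_{s}/s\to-\Gamma$ and $V_{t}/t\to-\Gamma$ independently as $(s,t)\to(0,0)$, the products converge to $(-\Gamma)^{*}(-\Gamma)=\Gamma^{*}\Gamma$ and $(-\Gamma)(-\Gamma)^{*}=\Gamma\Gamma^{*}$. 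These are exactly (3.5) and (3.6).

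For an explicit rate, we can use the splitting
\[
X^{*}_{s}Y_{t}-\Gamma^{*}\Gamma=(X_{s}+\Gamma)^{*}Y_{t}-\Gamma^{*}(Y_{t}+\Gamma),\qquad X_{s}=V_{s}/s,\ Y_{t}=V_{t}/t .
\]
This gives an error of order $O(s+t)$.

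There is no serious obstacle here. The only point that needs care is that the limit is joint in $(s,t)$, not taken along the diagonal $s=t$. This is handled because the remainder estimate for $V_{t}/t$ is uniform in $t$ alone, and the two factors separate.
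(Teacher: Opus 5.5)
Your proposal is correct and matches the paper's proof: expand $e^{-tA}=I-tA+O(t^{2})$ in operator norm, compress to get $V_{t}=-t\Gamma+O(t^{2})$, and factor $\frac{1}{st}V_{s}^{*}V_{t}$ and $\frac{1}{st}V_{s}V_{t}^{*}$ through the normalized leakage maps via \prettyref{lem:3-1}. Your explicit remainder bound and the $O(s+t)$ splitting are a small, correct addition that makes the joint limit in $(s,t)$ fully explicit.
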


\begin{proof}
Since $A$ is bounded, 
\[
e^{-tA}=I-tA+O\left(t^{2}\right),\quad\text{as }t\downarrow0.
\]
Taking the lower-left block and using $P^{\perp}P=0$, we get 
\[
V_{t}=P^{\perp}e^{-tA}P|_{PH}=P^{\perp}\left(e^{-tA}-I\right)P|_{PH}=-tP^{\perp}AP|_{PH}+O\left(t^{2}\right).
\]
Thus $V_{t}=-t\Gamma+O\left(t^{2}\right)$, and hence $t^{-1}V_{t}\to-\Gamma$.

By \prettyref{lem:3-1}, $C_{s+t}-C_{s}C_{t}=V^{*}_{s}V_{t}$. Therefore
\[
\frac{1}{st}\left(C_{s+t}-C_{s}C_{t}\right)=\left(\frac{1}{s}V_{s}\right)^{*}\left(\frac{1}{t}V_{t}\right)\to\Gamma^{*}\Gamma.
\]
The same argument, using $D_{s+t}-D_{s}D_{t}=V_{s}V^{*}_{t}$, gives
\[
\frac{1}{st}\left(D_{s+t}-D_{s}D_{t}\right)=\left(\frac{1}{s}V_{s}\right)\left(\frac{1}{t}V_{t}\right)^{*}\to\Gamma\Gamma^{*}.
\]
\end{proof}

\begin{rem}
The boundedness of $A$ is used only to get $V_{t}=-t\Gamma+O\left(t^{2}\right)$
in operator norm. The same proof applies whenever $t^{-1}V_{t}\to-\Gamma$
in operator norm for some $\Gamma\in B\left(PH,P^{\perp}H\right)$.
If $PH$ is finite-dimensional, this follows from $PH\subset\mathcal{D}\left(A\right)$,
with $\Gamma=P^{\perp}AP|_{PH}$. 

For unbounded generators, this conclusion is not automatic. The expansion
$e^{-tA}=I-tA+O\left(t^{2}\right)$ does not hold in operator norm,
and the block $P^{\perp}AP|_{PH}$ need not be defined on all of $PH$.
Even when $PH\subset\mathcal{D}\left(A\right)$, one generally obtains
only pointwise convergence of $t^{-1}P^{\perp}\left(e^{-tA}-I\right)P|_{PH}$
on $PH$, not norm convergence. 
\end{rem}

Thus, in the bounded-generator case, the two covariance kernels have
infinitesimal coefficients determined by the off-diagonal block $\Gamma$.
The upper-left coefficient is $\Gamma^{*}\Gamma$, and the lower-right
coefficient is $\Gamma\Gamma^{*}$. In \prettyref{sec:4}, we study
short-time limits in which this quadratic scaling is replaced by another
scale.

\subsection{Higher order dynamics}

We next give a higher-order form of the infinitesimal calculation
above. Let 
\[
H=PH\oplus P^{\perp}H=\underset{PH}{\underbrace{H_{1}}}\oplus\underset{P^{\perp}H}{\underbrace{\cdots\oplus H_{m}}}
\]
be a finite orthogonal decomposition, and let $P_{i}$ be the orthogonal
projection onto $H_{i}$. We take $H_{1}=PH$, $P_{1}=P$. 

Given $T_{t}=e^{-tA}$ with $A\in B\left(H\right)$, write 
\[
A_{ij}=P_{i}AP_{j}|_{H_{j}}.
\]

Let $G$ be a graph on $\left\{ 1,\ldots,m\right\} $. We say that
$A$ has block support in $G$ if 
\[
A_{ij}=0
\]
whenever $i\neq j$ and $\left\{ i,j\right\} $ is not an edge of
$G$.

For $k\geq2$, put 
\[
V^{\left(k\right)}_{t}=P_{k}e^{-tA}P_{1}|_{H_{1}}.
\]
Under the decomposition 
\[
P^{\perp}H=H_{2}\oplus\cdots\oplus H_{m},
\]
we have 
\[
V_{t}=\begin{pmatrix}V^{\left(2\right)}_{t}\\
\vdots\\
V^{\left(m\right)}_{t}
\end{pmatrix}.
\]
Thus 
\[
E_{s,t}=\sum^{m}_{k=2}E^{\left(k\right)}_{s,t},\qquad E^{\left(k\right)}_{s,t}=\left(V^{\left(k\right)}_{s}\right)^{*}V^{\left(k\right)}_{t}.
\]

Here $H_{1}=PH$ is the observed block, while $H_{2},\ldots,H_{m}$
are the hidden blocks. The maps $V^{\left(k\right)}_{t}$ are the
components of the leakage map $V_{t}$ landing in the individual hidden
blocks. 

For $k\geq2$, let $d_{k}$ be the graph distance from $1$ to $k$
in $G$, with $d_{k}=\infty$ if there is no path. \prettyref{prop:3-7}
gives the first possible short-time order of each such component in
terms of the block support of $A$. If the $k$-th hidden block is
at graph distance $d_{k}$ from the observed block $H_{1}$, then
$V^{\left(k\right)}_{t}$ has no terms of order below $t^{d_{k}}$.
Consequently, $E^{\left(k\right)}_{s,t}$ has no terms below the scale
$s^{d_{k}}t^{d_{k}}$. 
\begin{prop}
\label{prop:3-7} Assume that $A\in B\left(H\right)$, $A\geq0$,
and that $A$ has block support in $G$. Let $d_{k}$ be the graph
distance from $1$ to $k$. If $d_{k}<\infty$, then, in operator
norm, 
\begin{equation}
V^{\left(k\right)}_{t}=\frac{\left(-t\right)^{d_{k}}}{d_{k}!}P_{k}A^{d_{k}}P_{1}|_{H_{1}}+O\left(t^{d_{k}+1}\right)\label{eq:3-5}
\end{equation}
as $t\downarrow0$. Consequently, as $\left(s,t\right)\to\left(0,0\right)$
with $s,t>0$, 
\begin{equation}
E^{\left(k\right)}_{s,t}=\frac{s^{d_{k}}t^{d_{k}}}{\left(d_{k}!\right)^{2}}\left(P_{k}A^{d_{k}}P_{1}|_{H_{1}}\right)^{*}\left(P_{k}A^{d_{k}}P_{1}|_{H_{1}}\right)+O\left(s^{d_{k}+1}t^{d_{k}}+s^{d_{k}}t^{d_{k}+1}\right).\label{eq:3-6}
\end{equation}
If $d_{k}=\infty$, then 
\begin{equation}
V^{\left(k\right)}_{t}=0\label{eq:3-7}
\end{equation}
for all $t\geq0$. 
\end{prop}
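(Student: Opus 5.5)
The proof rests on one combinatorial fact: $P_kA^jP_1=0$ whenever $j<d_k$. We will establish that, then read off the expansions from the power series of $e^{-tA}$.

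\emph{Vanishing of low powers.} Inserting $I=\sum_i P_i$ between the factors gives
\[
P_kA^jP_1=\sum_{i_1,\ldots,i_{j-1}}A_{k i_{j-1}}\cdots A_{i_2 i_1}A_{i_1 1},
\]
a sum over index sequences $1=i_0,i_1,\ldots,i_j=k$. By the block support hypothesis, a product is nonzero only if each consecutive pair $(i_{l-1},i_l)$ is either equal or an edge of $G$. Deleting repeated consecutive indices from such a sequence leaves a walk in $G$ from $1$ to $k$ with at most $j$ edges. Hence every term vanishes when $j<d_k$, and all terms vanish for every $j$ when $d_k=\infty$. The case $j=0$ is immediate, since $P_kP_1=0$ for $k\neq1$.

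\emph{Expansion of $V^{(k)}_t$.} Because $A$ is bounded, $e^{-tA}=\sum_j(-t)^jA^j/j!$ converges in operator norm. Therefore
\[
V^{(k)}_t=\sum_{j\ge d_k}\frac{(-t)^j}{j!}P_kA^jP_1|_{H_1}.
\]
The tail beyond $j=d_k$ has norm at most
\[
\sum_{j>d_k}\frac{t^j\|A\|^j}{j!}\le t^{d_k+1}\|A\|^{d_k+1}e^{t\|A\|}=O(t^{d_k+1}),
\]
which proves \prettyref{eq:3-5}. If $d_k=\infty$, every coefficient in the series vanishes, which gives \prettyref{eq:3-7}.

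\emph{Expansion of $E^{(k)}_{s,t}$.} Write $M=P_kA^{d_k}P_1|_{H_1}$, and set $V^{(k)}_t=c_tM+R_t$ with $c_t=(-t)^{d_k}/d_k!$ and $\|R_t\|\le Ct^{d_k+1}$ for $t\le1$. Since $c_s$ is real, $(c_sM)^*=c_sM^*$. Expanding $(c_sM+R_s)^*(c_tM+R_t)$, the leading term is
\[
c_sc_tM^*M=\frac{(-1)^{2d_k}s^{d_k}t^{d_k}}{(d_k!)^2}M^*M,
\]
and the sign disappears because $(-1)^{2d_k}=1$. The cross terms are bounded by $O(s^{d_k}t^{d_k+1})$ and $O(s^{d_k+1}t^{d_k})$. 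The term $R_s^*R_t$ is $O(s^{d_k+1}t^{d_k+1})$, which is absorbed into either error term once $s,t\le1$. This gives \prettyref{eq:3-6}.

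The main obstacle is the combinatorial step. The rest is routine, but that step requires care with the diagonal blocks $A_{ii}$, which are not constrained by $G$. The hypothesis $A\ge0$ plays no role in the argument, since only boundedness of $A$ is used.
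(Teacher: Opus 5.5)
Your proof is correct and follows the paper's argument essentially step for step. It uses the same block expansion of $P_kA^{n}P_1$, deleting consecutive repetitions to rule out $n<d_k$, and then the exponential series and multiplication of the expansions at $s$ and $t$; your explicit tail bound and your observation that only boundedness of $A$ is used are both accurate refinements of details the paper leaves implicit.
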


\begin{proof}
Since $A$ is bounded, 
\[
e^{-tA}=\sum^{\infty}_{n=0}\frac{\left(-t\right)^{n}}{n!}A^{n}
\]
in operator norm, uniformly for $t$ in bounded intervals.

We first show that 
\[
P_{k}A^{n}P_{1}=0
\]
whenever $n<d_{k}$. For $n\geq1$, the block expansion of $A^{n}$
gives 
\[
P_{k}A^{n}P_{1}=\sum_{i_{1},\ldots,i_{n-1}}A_{ki_{n-1}}A_{i_{n-1}i_{n-2}}\cdots A_{i_{1}1}.
\]
A summand can be nonzero only if each successive pair among 
\[
1,i_{1},\ldots,i_{n-1},k
\]
is either equal or adjacent in $G$. Deleting consecutive repetitions
gives a path from $1$ to $k$ of length at most $n$. If $n<d_{k}$,
no such path exists. Hence 
\[
P_{k}A^{n}P_{1}=0,\qquad n<d_{k}.
\]

If $d_{k}<\infty$, the exponential series gives 
\[
P_{k}e^{-tA}P_{1}=\frac{\left(-t\right)^{d_{k}}}{d_{k}!}P_{k}A^{d_{k}}P_{1}+\sum^{\infty}_{n=d_{k}+1}\frac{\left(-t\right)^{n}}{n!}P_{k}A^{n}P_{1}.
\]
The remaining sum is $O\left(t^{d_{k}+1}\right)$ in operator norm.
Restricting to $H_{1}$ gives \prettyref{eq:3-5}.

Put 
\[
M_{k}=P_{k}A^{d_{k}}P_{1}|_{H_{1}}.
\]
By \prettyref{eq:3-5}, 
\[
V^{\left(k\right)}_{t}=\frac{\left(-t\right)^{d_{k}}}{d_{k}!}M_{k}+R_{k}\left(t\right),\qquad\left\Vert R_{k}\left(t\right)\right\Vert =O\left(t^{d_{k}+1}\right).
\]
Using 
\[
E^{\left(k\right)}_{s,t}=\left(V^{\left(k\right)}_{s}\right)^{*}V^{\left(k\right)}_{t},
\]
we get 
\[
\begin{aligned}E^{\left(k\right)}_{s,t} & =\frac{s^{d_{k}}t^{d_{k}}}{\left(d_{k}!\right)^{2}}M^{*}_{k}M_{k}+O\left(s^{d_{k}+1}t^{d_{k}}+s^{d_{k}}t^{d_{k}+1}\right).\end{aligned}
\]
This is \prettyref{eq:3-6}.

If $d_{k}=\infty$, the same block expansion gives 
\[
P_{k}A^{n}P_{1}=0
\]
for every $n\geq0$. Hence $P_{k}e^{-tA}P_{1}=0$ for all $t\geq0$,
which proves \prettyref{eq:3-7}. 
\end{proof}

\begin{rem*}
Thus the quadratic coefficient from \prettyref{prop:3-5} comes only
from blocks adjacent to the observed block. A block at distance $d$
can first appear at the scale $s^{d}t^{d}$, with coefficient 
\[
\frac{1}{\left(d!\right)^{2}}\left(P_{k}A^{d}P_{1}|_{H_{1}}\right)^{*}\left(P_{k}A^{d}P_{1}|_{H_{1}}\right).
\]
If $P_{k}A^{d}P_{1}|_{H_{1}}=0$, then the first nonzero contribution
occurs at a higher order. The two-block case treats all hidden directions
as adjacent to $K$, so all hidden leakage is measured at the same
quadratic scale.
\end{rem*}
The distance $d_{k}$ gives the first order allowed by the block support.
The first nonzero term may occur later. Define 
\[
r_{k}=\inf\left\{ n\geq0:P_{k}A^{n}P_{1}|_{H_{1}}\neq0\right\} ,
\]
with the convention that $r_{k}=\infty$ if the set is empty. By the
proof of \prettyref{prop:3-7}, 
\[
r_{k}\geq d_{k}.
\]
If $r_{k}<\infty$, the same argument gives 
\[
V^{\left(k\right)}_{t}=\frac{\left(-t\right)^{r_{k}}}{r_{k}!}P_{k}A^{r_{k}}P_{1}|_{H_{1}}+O\left(t^{r_{k}+1}\right)
\]
in operator norm. Hence 
\[
E^{\left(k\right)}_{s,t}=\frac{s^{r_{k}}t^{r_{k}}}{\left(r_{k}!\right)^{2}}\left(P_{k}A^{r_{k}}P_{1}|_{H_{1}}\right)^{*}\left(P_{k}A^{r_{k}}P_{1}|_{H_{1}}\right)+O\left(s^{r_{k}+1}t^{r_{k}}+s^{r_{k}}t^{r_{k}+1}\right).
\]
Thus $d_{k}$ is a support bound, while $r_{k}$ is the actual order
seen by the $k$-th component.
\begin{cor}
\label{cor:3-8} For $r\geq1$, let 
\[
Q_{r}=\sum_{d_{k}=r}P_{k},
\]
where the sum is over $k\in\left\{ 2,\ldots,m\right\} $ with $d_{k}=r$,
and put 
\[
V^{[r]}_{t}=Q_{r}e^{-tA}P_{1}|_{H_{1}}.
\]
Then 
\[
V_{t}=\sum_{r\geq1}V^{[r]}_{t},
\]
with orthogonal ranges for different $r$, and 
\[
E_{s,t}=\sum_{r\geq1}\left(V^{[r]}_{s}\right)^{*}V^{[r]}_{t}.
\]
For every $r$ with $Q_{r}\neq0$, 
\[
V^{[r]}_{t}=\frac{\left(-t\right)^{r}}{r!}Q_{r}A^{r}P_{1}|_{H_{1}}+O\left(t^{r+1}\right)
\]
in operator norm. Consequently, 
\[
\left(V^{[r]}_{s}\right)^{*}V^{[r]}_{t}=\frac{s^{r}t^{r}}{\left(r!\right)^{2}}\left(Q_{r}A^{r}P_{1}|_{H_{1}}\right)^{*}\left(Q_{r}A^{r}P_{1}|_{H_{1}}\right)+O\left(s^{r+1}t^{r}+s^{r}t^{r+1}\right).
\]
\end{cor}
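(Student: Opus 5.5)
The plan is to reduce everything to \prettyref{prop:3-7} by grouping the hidden blocks according to their distance from the observed block. First, $V_{t}=\sum_{r\geq1}V^{[r]}_{t}$ with orthogonal ranges. Each hidden index $k\in\{2,\ldots,m\}$ has a distance $d_{k}\in\{1,2,\ldots\}\cup\{\infty\}$; indeed $d_{k}\geq1$ since $k\neq1$. So $P^{\perp}=\sum_{r\geq1}Q_{r}+Q_{\infty}$, where $Q_{\infty}=\sum_{d_{k}=\infty}P_{k}$, and the $Q_{r}$ are mutually orthogonal projections because the $P_{k}$ are. By \prettyref{eq:3-7}, $Q_{\infty}e^{-tA}P_{1}=0$. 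Hence
\[
V_{t}=P^{\perp}e^{-tA}P_{1}|_{H_{1}}=\sum_{r\geq1}Q_{r}e^{-tA}P_{1}|_{H_{1}}=\sum_{r\geq1}V^{[r]}_{t},
\]
which is a finite sum with orthogonal ranges. Since $Q_{r}Q_{r'}=\delta_{rr'}Q_{r}$, we get $(V^{[r]}_{s})^{*}V^{[r']}_{t}=0$ for $r\neq r'$. Combined with $E_{s,t}=V^{*}_{s}V_{t}$ from \prettyref{lem:3-1}, this gives the decomposition $E_{s,t}=\sum_{r}(V^{[r]}_{s})^{*}V^{[r]}_{t}$.

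Next comes the expansion of $V^{[r]}_{t}$. I would not assemble it from the individual expansions \prettyref{eq:3-5}; instead I would repeat the series argument directly. Write $V^{[r]}_{t}=\sum_{n\geq0}\frac{(-t)^{n}}{n!}Q_{r}A^{n}P_{1}|_{H_{1}}$. The proof of \prettyref{prop:3-7} shows that $P_{k}A^{n}P_{1}=0$ for $n<d_{k}$. Every $k$ in $Q_{r}$ has $d_{k}=r$, so summing gives $Q_{r}A^{n}P_{1}=0$ for $n<r$. The tail $\sum_{n\geq r+1}$ is bounded in norm by $t^{r+1}\|A\|^{r+1}e^{t\|A\|}$, which is $O(t^{r+1})$. (Equivalently, one can sum \prettyref{eq:3-5} over the finitely many $k$ with $d_{k}=r$, using $\sum_{d_{k}=r}P_{k}A^{r}P_{1}=Q_{r}A^{r}P_{1}$.)

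Finally, for the product, set $M=Q_{r}A^{r}P_{1}|_{H_{1}}$ and $V^{[r]}_{t}=\frac{(-t)^{r}}{r!}M+R(t)$ with $\|R(t)\|=O(t^{r+1})$. Expanding gives
\[
(V^{[r]}_{s})^{*}V^{[r]}_{t}=\frac{(-s)^{r}(-t)^{r}}{(r!)^{2}}M^{*}M+\frac{(-s)^{r}}{r!}M^{*}R(t)+\frac{(-t)^{r}}{r!}R(s)^{*}M+R(s)^{*}R(t).
\]
Since $(-s)^{r}(-t)^{r}=s^{r}t^{r}$, the leading term matches. The two cross terms are $O(s^{r}t^{r+1})$ and $O(s^{r+1}t^{r})$. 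The last term is $O(s^{r+1}t^{r+1})$, which is absorbed into either error term for $s,t$ small. There is no real obstacle here. The only points needing care are that indices with $d_{k}=\infty$ drop out via \prettyref{eq:3-7}, and that the sum over $r$ is finite, which holds because $m$ is finite.
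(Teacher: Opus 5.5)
Your proposal is correct and follows the paper's proof essentially step for step. You split $P^{\perp}$ by the $Q_{r}$, obtain the expansion of $V^{[r]}_{t}$ from \prettyref{prop:3-7}, and multiply the expansions at $s$ and $t$; the paper sums \prettyref{eq:3-5} over the $k$ with $d_{k}=r$, which you note as an equivalent alternative to your direct series argument. Your explicit use of \prettyref{eq:3-7} to discard the blocks with $d_{k}=\infty$ is a small improvement: the paper's claim that the $Q_{r}$ sum to $P^{\perp}$ tacitly assumes every hidden block is connected to the observed one.
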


\begin{proof}
Since the projections $Q_{r}$ have mutually orthogonal ranges and
sum to $P^{\perp}$, we have 
\[
V_{t}=P^{\perp}e^{-tA}P_{1}|_{H_{1}}=\sum_{r\geq1}Q_{r}e^{-tA}P_{1}|_{H_{1}}=\sum_{r\geq1}V^{[r]}_{t}.
\]
The sum is finite. The orthogonality of the ranges gives 
\[
\left(V^{[r]}_{s}\right)^{*}V^{[q]}_{t}=0
\]
whenever $r\neq q$. Hence 
\[
E_{s,t}=V^{*}_{s}V_{t}=\sum_{r\geq1}\left(V^{[r]}_{s}\right)^{*}V^{[r]}_{t}.
\]

For $Q_{r}\neq0$, summing \prettyref{eq:3-5} over the indices $k$
with $d_{k}=r$ gives 
\[
V^{[r]}_{t}=\frac{\left(-t\right)^{r}}{r!}Q_{r}A^{r}P_{1}|_{H_{1}}+O\left(t^{r+1}\right).
\]
Multiplying this expansion at $s$ and $t$ gives the stated expansion
for $\left(V^{[r]}_{s}\right)^{*}V^{[r]}_{t}$. 
\end{proof}

This gives a distance filtration of the compression covariance. The
ordinary quadratic coefficient comes from the distance-one layer.
A block at distance $r$ can first contribute at the scale $s^{r}t^{r}$,
unless the corresponding coefficient vanishes.
\begin{example}
Suppose 
\[
H=H_{1}\oplus H_{2}\oplus H_{3}
\]
and $A\in B\left(H\right)$, $A\geq0$, has block support in the path
\[
1-2-3.
\]
Then $A_{31}=A_{13}=0$. For the observed space $H_{1}$, the $H_{2}$
component has distance one, while the $H_{3}$ component has distance
two. Hence 
\[
V^{\left(2\right)}_{t}=-tP_{2}AP_{1}|_{H_{1}}+O\left(t^{2}\right),
\]
and 
\[
V^{\left(3\right)}_{t}=\frac{t^{2}}{2}P_{3}A^{2}P_{1}|_{H_{1}}+O\left(t^{3}\right).
\]
Thus 
\[
E^{\left(2\right)}_{s,t}=st\left(P_{2}AP_{1}|_{H_{1}}\right)^{*}\left(P_{2}AP_{1}|_{H_{1}}\right)+O\left(s^{2}t+st^{2}\right),
\]
whereas 
\[
E^{\left(3\right)}_{s,t}=\frac{s^{2}t^{2}}{4}\left(P_{3}A^{2}P_{1}|_{H_{1}}\right)^{*}\left(P_{3}A^{2}P_{1}|_{H_{1}}\right)+O\left(s^{3}t^{2}+s^{2}t^{3}\right).
\]
The second hidden block is therefore invisible at the quadratic scale
unless it is directly coupled to $H_{1}$. 
\end{example}

The path case includes the finite Jacobi, or nearest-neighbor chain,
model.
\begin{example}
\label{exa:3-10} Let $H=\mathbb{C}^{m}$, let $H_{j}=\mathbb{C}e_{j}$,
and let $P$ be the projection onto $H_{1}$. Suppose that $A\in B\left(H\right)$
is a positive self-adjoint tridiagonal matrix, 
\[
A=\begin{pmatrix}b_{1} & a_{1} & 0 & \cdots & 0\\
\overline{a_{1}} & b_{2} & a_{2} & \cdots & 0\\
0 & \overline{a_{2}} & b_{3} & \ddots & 0\\
\vdots & \vdots & \ddots & \ddots & a_{m-1}\\
0 & 0 & 0 & \overline{a_{m-1}} & b_{m}
\end{pmatrix}.
\]
Then $A$ has block support in the path graph 
\[
1-2-\cdots-m.
\]
The observed block is the first site $H_{1}$, and the hidden block
$H_{k}$ is at graph distance $k-1$ from it. Thus 
\[
P_{k}e^{-tA}P_{1}|_{H_{1}}=\frac{\left(-t\right)^{k-1}}{\left(k-1\right)!}P_{k}A^{k-1}P_{1}|_{H_{1}}+O\left(t^{k}\right).
\]
In this tridiagonal case there is only one shortest path from $1$
to $k$, and hence 
\[
P_{k}A^{k-1}P_{1}|_{H_{1}}=\overline{a_{k-1}}\cdots\overline{a_{1}}\,\left|e_{k}\left\rangle \right\langle e_{1}\right|.
\]
Therefore 
\[
P_{k}e^{-tA}e_{1}=\frac{\left(-t\right)^{k-1}}{\left(k-1\right)!}\overline{a_{k-1}}\cdots\overline{a_{1}}\,e_{k}+O\left(t^{k}\right).
\]
Consequently, 
\[
E^{\left(k\right)}_{s,t}=\frac{s^{k-1}t^{k-1}}{\left(\left(k-1\right)!\right)^{2}}\left|a_{k-1}\cdots a_{1}\right|^{2}\left|e_{1}\left\rangle \right\langle e_{1}\right|+O\left(s^{k}t^{k-1}+s^{k-1}t^{k}\right).
\]
If $a_{k-1}\cdots a_{1}\neq0$, then the contribution from the $k$-th
site first appears at the scale $s^{k-1}t^{k-1}$. Thus the short-time
covariance detects the nearest-neighbor structure through the order
at which each hidden component first appears.
\end{example}

The preceding examples also indicate what changes in infinite-dimensional
nearest-neighbor models. If the Jacobi coefficients are uniformly
bounded, then the corresponding operator on $\ell^{2}\left(\mathbb{N}\right)$
is bounded, and the same argument applies to each fixed site. Thus,
for a bounded infinite Jacobi matrix, the component of $e^{-tA}e_{1}$
at the $k$-th site still has no term before order $t^{k-1}$.

For genuinely unbounded Jacobi operators, this conclusion is no longer
an operator-norm Taylor statement. The expansion of $e^{-tA}$ at
$t=0$ does not hold in operator norm, and the vectors involved must
be controlled by the domains of powers of $A$. If $e_{1}\in\mathcal{D}\left(A^{N}\right)$,
then one may still read the first $N$ coefficients weakly, or on
the finite span generated by $e_{1},Ae_{1},\ldots,A^{N}e_{1}$. In
that sense the same nearest-neighbor constraint remains visible at
finite order. But it is no longer a uniform bounded-block statement
of the form used above.

The bounded path model also has an intrinsic form. It need not be
chosen in advance from a graph. Assume that $A\in B\left(H\right)$
is self-adjoint. Starting from the observed subspace $PH$, form the
increasing Krylov subspaces
\[
\mathcal{K}_{n}=\overline{span}\left\{ A^{j}PH:0\leq j\leq n\right\} ,\qquad n\geq0,
\]
and define 
\[
\mathcal{H}_{0}=PH,\qquad\mathcal{H}_{n}=\mathcal{K}_{n}\ominus\mathcal{K}_{n-1},\qquad n\geq1.
\]
Let $Q_{n}$ be the orthogonal projection onto $\mathcal{H}_{n}$.
\begin{thm}
\label{thm:3-11} With the notation above, 
\[
A\mathcal{H}_{n}\subseteq\mathcal{H}_{n-1}\oplus\mathcal{H}_{n}\oplus\mathcal{H}_{n+1}
\]
for every $n\geq0$, with the convention that $\mathcal{H}_{-1}=\{0\}$.
In particular, the restriction of $A$ to 
\[
\mathcal{K}_{\infty}=\overline{\bigcup_{n\geq0}\mathcal{K}_{n}}
\]
is block tridiagonal relative to 
\[
\mathcal{K}_{\infty}=\bigoplus_{n\geq0}\mathcal{H}_{n}.
\]
Moreover, for every $n\geq1$, 
\[
Q_{n}e^{-tA}P|_{PH}=\frac{\left(-t\right)^{n}}{n!}Q_{n}A^{n}P|_{PH}+O\left(t^{n+1}\right)
\]
in operator norm as $t\downarrow0$, and 
\[
\overline{ran\left(Q_{n}A^{n}P|_{PH}\right)}=\mathcal{H}_{n}.
\]
\end{thm}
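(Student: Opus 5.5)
The plan is to derive everything from the inclusion $A\mathcal{K}_{n}\subseteq\mathcal{K}_{n+1}$, combine it with self-adjointness, and then reuse the exponential-series argument from the proof of \prettyref{prop:3-7}.

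\emph{Tridiagonal structure.} Since $A$ is bounded, $A$ maps $span\{A^{j}PH:j\le n\}$ into $span\{A^{j}PH:j\le n+1\}$. Passing to closures gives $A\mathcal{K}_{n}\subseteq\mathcal{K}_{n+1}$. Because $\mathcal{H}_{n}\subseteq\mathcal{K}_{n}$, it follows that $A\mathcal{H}_{n}\subseteq\mathcal{K}_{n+1}=\mathcal{H}_{0}\oplus\cdots\oplus\mathcal{H}_{n+1}$. Now let $x\in\mathcal{H}_{n}$ and $y\in\mathcal{H}_{j}$ with $j\le n-2$. Then $Ay\in\mathcal{K}_{j+1}\subseteq\mathcal{K}_{n-1}$, which is orthogonal to $\mathcal{H}_{n}$. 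By self-adjointness, $\langle Ax,y\rangle=\langle x,Ay\rangle=0$. Hence the components of $Ax$ in $\mathcal{H}_{j}$ vanish for $j\le n-2$, and $A\mathcal{H}_{n}\subseteq\mathcal{H}_{n-1}\oplus\mathcal{H}_{n}\oplus\mathcal{H}_{n+1}$. The identity $\mathcal{K}_{\infty}=\bigoplus_{n}\mathcal{H}_{n}$ holds because the $\mathcal{K}_{n}$ increase and $\mathcal{H}_{n}$ is the successive orthogonal difference. The block tridiagonal form on $\mathcal{K}_{\infty}$ then follows, and $\mathcal{K}_{\infty}$ is invariant under $A$.

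\emph{Short-time expansion.} I will show $Q_{n}A^{j}P=0$ for $j<n$. This holds because $A^{j}PH\subseteq\mathcal{K}_{j}\subseteq\mathcal{K}_{n-1}\perp\mathcal{H}_{n}$. Alternatively, it follows from the tridiagonal structure by the path-counting argument of \prettyref{prop:3-7}, applied to the path graph on the layers with distance $n$ from layer $0$. Expanding $e^{-tA}$ in its norm-convergent series and applying $Q_{n}$ on the left and $P$ on the right removes all terms with $j<n$. The tail $\sum_{j>n}$ is $O(t^{n+1})$ in norm, bounded by $t^{n+1}e^{t\|A\|}\|A\|^{n+1}$.

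\emph{Range identity.} This is the step needing the most care. For $h\in PH$, write $A^{n}h=Q_{n}A^{n}h+k$ with $k\in\mathcal{K}_{n-1}$, which is possible since $A^{n}h\in\mathcal{K}_{n}$. Then $span\{A^{j}PH:j\le n\}$ is contained in $span\{A^{j}PH:j\le n-1\}+ran(Q_{n}A^{n}P|_{PH})$, so $\mathcal{K}_{n}\subseteq\overline{\mathcal{K}_{n-1}+ran(Q_{n}A^{n}P|_{PH})}$. Since $ran(Q_{n}A^{n}P|_{PH})\subseteq\mathcal{H}_{n}\perp\mathcal{K}_{n-1}$, the closure of this sum of orthogonal pieces equals $\mathcal{K}_{n-1}\oplus\overline{ran(Q_{n}A^{n}P|_{PH})}$. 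Therefore $\mathcal{H}_{n}=\mathcal{K}_{n}\ominus\mathcal{K}_{n-1}\subseteq\overline{ran(Q_{n}A^{n}P|_{PH})}$. The reverse inclusion is immediate. The main point to watch is exactly this closure: density arguments must be phrased through the orthogonal decomposition of the closed sum, not through algebraic spans alone.
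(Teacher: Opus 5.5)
Your proposal is correct and follows essentially the same route as the paper: $A\mathcal{K}_{n}\subseteq\mathcal{K}_{n+1}$ plus self-adjointness gives the tridiagonal structure, $Q_{n}A^{j}P=0$ for $j<n$ handles the norm-convergent exponential series, and $\mathcal{K}_{n}=\overline{\mathcal{K}_{n-1}+A^{n}PH}$ gives the range identity. The differences are minor: you test orthogonality against the layers $\mathcal{H}_{j}$, $j\leq n-2$, where the paper tests against $\mathcal{K}_{n-2}$, and your orthogonal-closure argument spells out the paper's terse ``applying $Q_{n}$'' step.
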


\begin{proof}
The subspaces $\mathcal{K}_{n}$ are increasing, and 
\[
A\mathcal{K}_{n}\subseteq\mathcal{K}_{n+1}.
\]
Let $x\in\mathcal{H}_{n}$. Then $x\in\mathcal{K}_{n}$, so $Ax\in\mathcal{K}_{n+1}$.
If $n\geq2$ and $y\in\mathcal{K}_{n-2}$, then $Ay\in\mathcal{K}_{n-1}$.
Since $x\perp\mathcal{K}_{n-1}$, self-adjointness of $A$ gives 
\[
\left\langle Ax,y\right\rangle =\left\langle x,Ay\right\rangle =0.
\]
Therefore $Ax\perp\mathcal{K}_{n-2}$ when $n\geq2$. It follows that
\[
Ax\in\mathcal{K}_{n+1}\ominus\mathcal{K}_{n-2}=\mathcal{H}_{n-1}\oplus\mathcal{H}_{n}\oplus\mathcal{H}_{n+1}.
\]
For $n=0$ and $n=1$, the same conclusion follows directly from $A\mathcal{K}_{n}\subseteq\mathcal{K}_{n+1}$.
Hence 
\[
A\mathcal{H}_{n}\subseteq\mathcal{H}_{n-1}\oplus\mathcal{H}_{n}\oplus\mathcal{H}_{n+1}
\]
for every $n\geq0$.

The subspace $\mathcal{K}_{\infty}$ is invariant under $A$. Since
$A$ is self-adjoint, $\mathcal{K}_{\infty}$ reduces $A$. The block
tridiagonal form follows from the inclusion just proved.

For the short-time expansion, use 
\[
e^{-tA}=\sum^{n}_{j=0}\frac{\left(-t\right)^{j}}{j!}A^{j}+O\left(t^{n+1}\right)
\]
in operator norm. If $j<n$, then $A^{j}PH\subseteq\mathcal{K}_{n-1}$,
and hence 
\[
Q_{n}A^{j}P|_{PH}=0.
\]
Therefore 
\[
Q_{n}e^{-tA}P|_{PH}=\frac{\left(-t\right)^{n}}{n!}Q_{n}A^{n}P|_{PH}+O\left(t^{n+1}\right).
\]

Finally, 
\[
\mathcal{K}_{n}=\overline{\mathcal{K}_{n-1}+A^{n}PH}.
\]
Applying $Q_{n}$ gives 
\[
\mathcal{H}_{n}=\overline{Q_{n}A^{n}PH}=\overline{ran\left(Q_{n}A^{n}P|_{PH}\right)}.
\]
\end{proof}

\begin{rem}
\label{rem:3-12}The finite Jacobi model in \prettyref{exa:3-10}
is the rank-one case of this construction. If $a_{1}\cdots a_{m-1}\neq0$,
then 
\[
\mathcal{K}_{n}=span\left\{ e_{1},\ldots,e_{n+1}\right\} ,\qquad\mathcal{H}_{n}=\mathbb{C}e_{n+1},
\]
for $0\leq n\leq m-1$. Thus $Q_{n}=P_{n+1}$, and the expansion in
\prettyref{thm:3-11} becomes the formula in \prettyref{exa:3-10}
with $k=n+1$. If one of the coefficients $a_{j}$ vanishes, the Krylov
layers stop or collapse accordingly; the construction follows the
part of the tridiagonal model actually generated from the observed
vector.
\end{rem}

The bounded constructions in this section rely on Taylor expansion
in operator norm, and therefore produce integer powers such as $s^{r}t^{r}$.
In \prettyref{sec:4}, we pass instead to rescaled covariance limits.
This keeps the semigroup identities but does not require a bounded
generator block, and the leading scale need not come from a Taylor
coefficient.

\section{Leakage tangents}\label{sec:4}

The preceding higher-order results used boundedness of $A$ in an
essential way. The operator-norm Taylor expansion of $e^{-tA}$ gave
integer-order coefficients for the leakage maps. We now return to
the general setting where $A\geq0$ is self-adjoint. The generator
may be unbounded, and an operator-norm Taylor expansion at $t=0$
need not exist.

Instead of extracting coefficients of $e^{-tA}$, we study rescaled
short-time limits of the compression covariance itself. The semigroup
identities from \prettyref{sec:3} survive under this rescaling. In
particular, the lower-right dynamics still acts on the limiting kernel
space. A tangent of $E_{s,t}$ therefore comes with more than positivity.
It carries a canonical semigroup on its Kolmogorov space.

\subsection{Tangent kernels and induced semigroups}

Given $\left(A,P\right)$, where $A\geq0$ is self-adjoint on a Hilbert
space $H$ and $P$ is an orthogonal projection, let 
\[
T_{t}:=e^{-tA},\qquad t\geq0,
\]
be the associated self-adjoint contraction semigroup, and set 
\[
K:=PH,\qquad V_{t}:=P^{\perp}T_{t}P|_{K},\qquad E_{s,t}:=V^{*}_{s}V_{t}.
\]

Let $a:\left(0,1\right)\to\left(0,\infty\right)$ be a scale function
with $a\left(\varepsilon\right)\to0$ as $\varepsilon\downarrow0$.
For $\varepsilon>0$, put 
\[
W_{\varepsilon}\left(t\right)=a\left(\varepsilon\right)^{-1/2}V_{\varepsilon t},\qquad t\geq0.
\]
Thus 
\[
W_{\varepsilon}\left(s\right)^{*}W_{\varepsilon}\left(t\right)=a\left(\varepsilon\right)^{-1}E_{\varepsilon s,\varepsilon t}.
\]

\begin{defn}
\label{def:4-1} A $B\left(K\right)$-valued kernel $F$ on $\left[0,\infty\right)\times\left[0,\infty\right)$
is called an $a$-scale leakage tangent for $\left(A,P\right)$ if
\begin{equation}
a\left(\varepsilon\right)^{-1}E_{\varepsilon s,\varepsilon t}\to F\left(s,t\right)\label{eq:4-1}
\end{equation}
locally uniformly in $B\left(K\right)$ as $\varepsilon\downarrow0$. 
\end{defn}

Every leakage tangent is positive, since each kernel 
\[
\left(s,t\right)\mapsto a\left(\varepsilon\right)^{-1}E_{\varepsilon s,\varepsilon t}
\]
is positive. Also, 
\[
F\left(0,t\right)=F\left(t,0\right)=0
\]
for all $t\geq0$, because $V_{0}=0$.

Let $H_{F}$ be the Kolmogorov space of the positive kernel $F$.
That is, $H_{F}$ is the completion of the algebraic span of symbols
$\left[t,h\right]$, $t\geq0$, $h\in K$, with inner product 
\[
\left\langle \left[s,h\right],\left[t,k\right]\right\rangle =\left\langle h,F\left(s,t\right)k\right\rangle .
\]
We write 
\[
w_{t}:K\to H_{F},\qquad w_{t}h=\left[t,h\right].
\]
Then 
\[
F\left(s,t\right)=w^{*}_{s}w_{t}.
\]

\prettyref{thm:4-2} is a main structural characterization. It shows
that the hidden block from \prettyref{sec:3} induces a semigroup
on the tangent Kolmogorov space $H_{F}$.
\begin{thm}
\label{thm:4-2} Assume that $F$ is an $a$-scale leakage tangent
for $\left(A,P\right)$. For each $r\geq0$, define on the algebraic
span of $\left\{ w_{t}h:t\geq0,\ h\in K\right\} $ by 
\begin{equation}
S_{r}w_{t}h=w_{r+t}h-w_{r}h.\label{eq:4-2}
\end{equation}
Then $S_{r}$ is well-defined and extends to a positive self-adjoint
contraction on $H_{F}$. Moreover, 
\[
S_{0}=I_{H_{F}},\qquad S_{r}S_{q}=S_{r+q}
\]
for all $r,q\geq0$. If $F$ is locally norm-continuous, then $\left(S_{r}\right)_{r\geq0}$
is strongly continuous.

Consequently, 
\begin{equation}
w_{r+t}=w_{r}+S_{r}w_{t}\label{eq:4-3}
\end{equation}
for all $r,t\geq0$. 
\end{thm}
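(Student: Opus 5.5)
The plan is to realize $S_r$ as a limit of the rescaled hidden block $D_{\varepsilon r}$, using the identity \prettyref{eq:3-4} from \prettyref{lem:3-2}. For fixed $\varepsilon$, that identity reads, after dividing by $a(\varepsilon)^{1/2}$,
\[
D_{\varepsilon r}W_{\varepsilon}(t)h=W_{\varepsilon}(r+t)h-W_{\varepsilon}(r)C_{\varepsilon t}h .
\]
Since $C_{\varepsilon t}\to I_K$ strongly as $\varepsilon\downarrow0$, the right side should approximate $W_\varepsilon(r+t)h-W_\varepsilon(r)h$, which is the pre-limit analogue of $w_{r+t}h-w_rh$. The key quantitative facts are that $D_{\varepsilon r}$ is a positive self-adjoint contraction and that inner products of the vectors $W_\varepsilon(\cdot)h$ converge to those of $w_\cdot h$. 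We transport these facts to $H_F$ by computing Gram quantities.

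The first step is to show that for $x=\sum_i w_{t_i}h_i$ and $y=\sum_j w_{u_j}k_j$ in the algebraic span, with $x_\varepsilon=\sum_i W_\varepsilon(t_i)h_i$ and similarly $y_\varepsilon$,
\[
\langle y, S_r x\rangle_{H_F}=\lim_{\varepsilon\downarrow0}\langle y_\varepsilon, D_{\varepsilon r}x_\varepsilon\rangle .
\]
Here $S_r x$ denotes the formal expression \prettyref{eq:4-2}. The right side equals
\[
\sum\langle k_j,a(\varepsilon)^{-1}(E_{\varepsilon u_j,\varepsilon(r+t_i)}-E_{\varepsilon u_j,\varepsilon r}C_{\varepsilon t_i})h_i\rangle .
\]
This is the main obstacle: the correction term
\[
a(\varepsilon)^{-1}E_{\varepsilon u,\varepsilon r}(C_{\varepsilon t}-I)h
\]
must vanish in the limit. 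The operators $a(\varepsilon)^{-1}E_{\varepsilon u,\varepsilon r}$ are norm-bounded because they converge in norm locally uniformly, and $(C_{\varepsilon t}-I)h\to0$ in $K$ by strong continuity. The product therefore tends to $0$, giving the formula $\langle k,(F(u,r+t)-F(u,r))h\rangle$.

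The same limiting procedure yields the positivity and contraction properties. We have
\[
\|S_rx\|^2=\lim\|D_{\varepsilon r}x_\varepsilon\|^2\le\lim\|x_\varepsilon\|^2=\|x\|^2 .
\]
The left limit is justified because $\|D x_\varepsilon\|^2$ expands into $\|W_\varepsilon(r+\cdot)\|$-terms plus correction terms; these are handled as above, now also needing the boundedness of $\|W_\varepsilon(r)\|$. Hence $S_r$ annihilates null vectors, is well-defined, and extends to a contraction on $H_F$. Symmetry follows from
\[
\langle y,S_rx\rangle=\lim\langle D_{\varepsilon r}y_\varepsilon,x_\varepsilon\rangle=\langle S_ry,x\rangle ,
\]
and positivity from $\langle x,S_rx\rangle=\lim\langle x_\varepsilon,D_{\varepsilon r}x_\varepsilon\rangle\ge0$. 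The second equality uses that $D_{\varepsilon r}=P^\perp T_{\varepsilon r}P^\perp\ge0$ is a compression of a positive operator.

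The remaining properties are algebraic on generators. $S_0w_th=w_th-w_0h=w_th$ because $w_0=0$, since $F(0,\cdot)=0$. For the semigroup law,
\[
S_rS_qw_th=S_r(w_{q+t}h-w_qh)=w_{r+q+t}h-w_{r+q}h=S_{r+q}w_th .
\]
By boundedness and density this extends to all of $H_F$. For strong continuity, when $F$ is locally norm-continuous we have
\[
\|w_{t'}h-w_th\|^2=\langle h,(F(t',t')-F(t',t)-F(t,t')+F(t,t))h\rangle\to0 .
\]
Then $S_rw_th-w_th=(w_{r+t}h-w_th)-w_rh\to0$ as $r\downarrow0$, and uniform boundedness gives strong continuity on $H_F$. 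Finally, \prettyref{eq:4-3} is just \prettyref{eq:4-2} rearranged.
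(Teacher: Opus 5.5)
Your proposal is correct and follows essentially the same route as the paper: you realize $S_r$ as the limit of $D_{\varepsilon r}$ acting on the lifts $x_\varepsilon$ via the rescaled identity from \prettyref{lem:3-2}, kill the correction term $W_\varepsilon(r)(C_{\varepsilon t}-I_K)h$ using uniform boundedness of $W_\varepsilon(r)$ and strong continuity of $(C_t)$, and then pass contractivity, positivity and symmetry of $D_{\varepsilon r}$ to the limit, with the semigroup law checked algebraically on generators. The only cosmetic differences are that you obtain $\lim\|D_{\varepsilon r}x_\varepsilon\|^2$ by expanding Gram terms rather than by showing $\|y_\varepsilon-D_{\varepsilon r}x_\varepsilon\|\to0$, and you check strong continuity only as $r\downarrow0$, which suffices for a contraction semigroup, whereas the paper checks $r\to q$ for every $q$.
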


\begin{proof}
For $\varepsilon>0$, recall that 
\[
W_{\varepsilon}\left(t\right)=a\left(\varepsilon\right)^{-1/2}V_{\varepsilon t}.
\]
By \prettyref{lem:3-2}, 
\[
D_{\varepsilon r}V_{\varepsilon t}=V_{\varepsilon\left(r+t\right)}-V_{\varepsilon r}C_{\varepsilon t}.
\]
Multiplying by $a\left(\varepsilon\right)^{-1/2}$ gives 
\begin{equation}
D_{\varepsilon r}W_{\varepsilon}\left(t\right)=W_{\varepsilon}\left(r+t\right)-W_{\varepsilon}\left(r\right)C_{\varepsilon t}.\label{eq:4-4}
\end{equation}

Let 
\[
x=\sum^{n}_{i=1}w_{t_{i}}h_{i}
\]
be a finite linear combination. Put 
\[
x_{\varepsilon}=\sum^{n}_{i=1}W_{\varepsilon}\left(t_{i}\right)h_{i}
\]
and 
\[
y_{\varepsilon}=\sum^{n}_{i=1}\left(W_{\varepsilon}\left(r+t_{i}\right)h_{i}-W_{\varepsilon}\left(r\right)h_{i}\right).
\]
The local uniform convergence in \prettyref{eq:4-1} gives 
\[
\left\Vert x_{\varepsilon}\right\Vert ^{2}\to\left\Vert x\right\Vert ^{2}
\]
and 
\[
\left\Vert y_{\varepsilon}\right\Vert ^{2}\to\left\Vert \sum^{n}_{i=1}\left(w_{r+t_{i}}h_{i}-w_{r}h_{i}\right)\right\Vert ^{2}.
\]

By \prettyref{eq:4-4}, 
\[
D_{\varepsilon r}x_{\varepsilon}=\sum^{n}_{i=1}\left(W_{\varepsilon}\left(r+t_{i}\right)h_{i}-W_{\varepsilon}\left(r\right)C_{\varepsilon t_{i}}h_{i}\right).
\]
Thus 
\[
y_{\varepsilon}-D_{\varepsilon r}x_{\varepsilon}=W_{\varepsilon}\left(r\right)\sum^{n}_{i=1}\left(C_{\varepsilon t_{i}}-I_{K}\right)h_{i}.
\]
The operators $W_{\varepsilon}\left(r\right)$ are bounded uniformly
for small $\varepsilon$, because 
\[
\left\Vert W_{\varepsilon}\left(r\right)\right\Vert ^{2}=\left\Vert W_{\varepsilon}\left(r\right)^{*}W_{\varepsilon}\left(r\right)\right\Vert =\left\Vert a\left(\varepsilon\right)^{-1}E_{\varepsilon r,\varepsilon r}\right\Vert 
\]
has a finite limit. Also $C_{\varepsilon t_{i}}h_{i}\to h_{i}$ for
each $i$, since $\left(C_{t}\right)_{t\geq0}$ is strongly continuous
and $C_{0}=I_{K}$. Hence 
\[
\left\Vert y_{\varepsilon}-D_{\varepsilon r}x_{\varepsilon}\right\Vert \to0.
\]

Since $D_{\varepsilon r}$ is a contraction on $P^{\perp}H$, 
\[
\left\Vert y_{\varepsilon}\right\Vert =\left\Vert D_{\varepsilon r}x_{\varepsilon}+o\left(1\right)\right\Vert \leq\left\Vert x_{\varepsilon}\right\Vert +o\left(1\right).
\]
Passing to the limit gives 
\[
\left\Vert \sum^{n}_{i=1}\left(w_{r+t_{i}}h_{i}-w_{r}h_{i}\right)\right\Vert \leq\left\Vert \sum^{n}_{i=1}w_{t_{i}}h_{i}\right\Vert .
\]
Therefore \prettyref{eq:4-2} is well-defined on the quotient and
extends to a contraction on $H_{F}$.

We next prove positivity. Since $D_{\varepsilon r}\geq0$, 
\[
\left\langle x_{\varepsilon},D_{\varepsilon r}x_{\varepsilon}\right\rangle \geq0.
\]
Using again 
\[
y_{\varepsilon}-D_{\varepsilon r}x_{\varepsilon}\to0
\]
and the convergence of the Gram kernels, we obtain 
\[
\left\langle x,S_{r}x\right\rangle =\lim_{\varepsilon\downarrow0}\left\langle x_{\varepsilon},D_{\varepsilon r}x_{\varepsilon}\right\rangle \geq0.
\]
Thus $S_{r}\geq0$.

The same approximation gives self-adjointness. For finite sums 
\[
x=\sum^{n}_{i=1}w_{t_{i}}h_{i},\qquad z=\sum^{m}_{j=1}w_{u_{j}}k_{j},
\]
let $x_{\varepsilon}$ and $z_{\varepsilon}$ be the corresponding
lifts. Since $D_{\varepsilon r}=D^{*}_{\varepsilon r}$, 
\[
\left\langle S_{r}x,z\right\rangle =\lim_{\varepsilon\downarrow0}\left\langle D_{\varepsilon r}x_{\varepsilon},z_{\varepsilon}\right\rangle =\lim_{\varepsilon\downarrow0}\left\langle x_{\varepsilon},D_{\varepsilon r}z_{\varepsilon}\right\rangle =\left\langle x,S_{r}z\right\rangle .
\]
Hence $S_{r}$ is self-adjoint.

It remains to prove the semigroup law. On generators, 
\[
\begin{aligned}S_{r}S_{q}w_{t}h & =S_{r}\left(w_{q+t}h-w_{q}h\right)\\
 & =\left(w_{r+q+t}h-w_{r}h\right)-\left(w_{r+q}h-w_{r}h\right)\\
 & =w_{r+q+t}h-w_{r+q}h\\
 & =S_{r+q}w_{t}h.
\end{aligned}
\]
Thus $S_{r}S_{q}=S_{r+q}$ on a dense subspace, and hence on $H_{F}$.
Also $S_{0}=I_{H_{F}}$, because $w_{0}=0$.

Finally, assume that $F$ is locally norm-continuous. For a generator
$w_{t}h$, 
\[
S_{r}w_{t}h-S_{q}w_{t}h=\left(w_{r+t}h-w_{r}h\right)-\left(w_{q+t}h-w_{q}h\right).
\]
The norm of this vector is a finite expression in the values of $F$
at the points 
\[
r+t,\ r,\ q+t,\ q.
\]
Hence it tends to zero as $r\to q$. Since the operators $S_{r}$
are contractions, strong continuity follows on all of $H_{F}$. The
identity \prettyref{eq:4-3} is just \prettyref{eq:4-2} rewritten. 
\end{proof}

\begin{rem}
\label{rem:4-3}The semigroup $\left(S_{r}\right)_{r\geq0}$ is part
of the tangent object, not imposed externally. It is forced by the
lower-right block dynamics 
\[
D_{r}V_{t}=V_{r+t}-V_{r}C_{t}.
\]
Thus a leakage tangent is the covariance kernel of an additive cocycle
for a self-adjoint contraction semigroup. 
\end{rem}

\prettyref{thm:4-2} gives intrinsic restrictions on which positive
kernels can arise as leakage tangents. For $r\geq0$, define the shifted
increment kernel 
\begin{equation}
F^{[r]}\left(s,t\right)=F\left(r+s,r+t\right)-F\left(r+s,r\right)-F\left(r,r+t\right)+F\left(r,r\right).\label{eq:4-5}
\end{equation}

\begin{cor}
\label{cor:4-4} For every $r\geq0$, 
\begin{equation}
0\leq F^{[r]}\leq F\label{eq:4-6}
\end{equation}
in the order of positive $B\left(K\right)$-valued kernels. Moreover,
for all $r,s,t\geq0$, 
\begin{equation}
F\left(r+s,t\right)-F\left(r,t\right)=F\left(s,r+t\right)-F\left(s,r\right).\label{eq:4-7}
\end{equation}
\end{cor}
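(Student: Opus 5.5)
The plan is to read both statements off \prettyref{thm:4-2}, using the representation $F(s,t)=w^{*}_{s}w_{t}$ and the cocycle identity \prettyref{eq:4-3}. For fixed $r\geq0$, \prettyref{eq:4-3} gives $w_{r+t}-w_{r}=S_{r}w_{t}$ for all $t\geq0$. Expanding the definition \prettyref{eq:4-5} in these terms gives
\[
F^{[r]}(s,t)=(w_{r+s}-w_{r})^{*}(w_{r+t}-w_{r})=(S_{r}w_{s})^{*}(S_{r}w_{t})=w^{*}_{s}S^{2}_{r}w_{t},
\]
where the last step uses that $S_{r}$ is self-adjoint.

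For positivity, take finitely many $s_{i}$ and $h_{i}\in K$. The quadratic form of $F^{[r]}$ is then $\Vert S_{r}\sum_{i}w_{s_{i}}h_{i}\Vert^{2}\geq0$, so $F^{[r]}\geq0$. For the upper bound, the quadratic form of $F-F^{[r]}$ equals $\langle x,(I-S^{2}_{r})x\rangle$ with $x=\sum_{i}w_{s_{i}}h_{i}$. This is nonnegative because $S_{r}$ is a contraction, so $S_{r}^{2}=S_{r}^{*}S_{r}\leq I$. That gives $0\leq F^{[r]}\leq F$ in the kernel order, which is \prettyref{eq:4-6}. One could instead use $S_{r}^{2}=S_{2r}$ together with $0\leq S_{2r}\leq I$, but contractivity alone is enough.

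For the transport identity \prettyref{eq:4-7}, the left side is
\[
F(r+s,t)-F(r,t)=(w_{r+s}-w_{r})^{*}w_{t}=(S_{r}w_{s})^{*}w_{t}=w^{*}_{s}S_{r}w_{t}.
\]
The right side is
\[
F(s,r+t)-F(s,r)=w^{*}_{s}(w_{r+t}-w_{r})=w^{*}_{s}S_{r}w_{t}.
\]
The two sides agree. Self-adjointness of $S_{r}$ is exactly what moves $S_{r}$ across the inner product here.

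I do not expect a genuine obstacle. The only point needing care is that \prettyref{eq:4-3} holds as an identity of operators $K\to H_{F}$ and not merely on classes. This follows because $S_{r}$ is defined on the generators $w_{t}h$ by \prettyref{eq:4-2}, so $S_{r}w_{t}$ and $w_{r+t}-w_{r}$ agree on every $h\in K$. The kernel inequalities are then checked on finite sums exactly as in the positivity argument of \prettyref{lem:2-2}.
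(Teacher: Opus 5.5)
Your proposal is correct and follows the paper's proof essentially step for step. Both arguments use the cocycle identity $w_{r+s}-w_{r}=S_{r}w_{s}$ to write $F^{[r]}(s,t)=w_{s}^{*}S_{r}^{2}w_{t}$, deduce $0\leq F^{[r]}\leq F$ from $0\leq S_{r}^{2}\leq I$, and obtain the transport identity by moving the self-adjoint $S_{r}$ across the inner product. The only cosmetic difference is that you get $S_{r}^{2}\leq I$ from contractivity ($S_{r}^{2}=S_{r}^{*}S_{r}$), while the paper gets it from the functional calculus applied to $0\leq S_{r}\leq I$.
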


\begin{proof}
By \prettyref{eq:4-3}, 
\[
w_{r+s}-w_{r}=S_{r}w_{s}.
\]
Therefore 
\[
F^{[r]}\left(s,t\right)=w^{*}_{s}S^{2}_{r}w_{t}.
\]
Since $0\leq S_{r}\leq I_{H_{F}}$, the functional calculus gives
\[
0\leq S^{2}_{r}\leq I_{H_{F}}.
\]
Hence \prettyref{eq:4-6} holds.

For \prettyref{eq:4-7}, use the self-adjointness of $S_{r}$: 
\[
\left\langle S_{r}w_{s}h,w_{t}k\right\rangle =\left\langle w_{s}h,S_{r}w_{t}k\right\rangle .
\]
Using $S_{r}w_{s}=w_{r+s}-w_{r}$ and $S_{r}w_{t}=w_{r+t}-w_{r}$
gives 
\[
\left\langle h,\left(F\left(r+s,t\right)-F\left(r,t\right)\right)k\right\rangle =\left\langle h,\left(F\left(s,r+t\right)-F\left(s,r\right)\right)k\right\rangle .
\]
Since $h,k$ are arbitrary, \prettyref{eq:4-7} follows. 
\end{proof}

\begin{defn}
\label{def:4-5} Let $K$ be a Hilbert space. A positive $B\left(K\right)$-valued
kernel $F$ on $\left[0,\infty\right)\times\left[0,\infty\right)$
with $F\left(0,t\right)=F\left(t,0\right)=0$ is called a \emph{cocycle
kernel} if, on its Kolmogorov space $H_{F}$, the formula 
\[
S_{r}w_{t}h=w_{r+t}h-w_{r}h
\]
defines a positive self-adjoint contraction semigroup $\left(S_{r}\right)_{r\geq0}$. 
\end{defn}

\begin{cor}
\label{cor:4-6}When the induced semigroup $\left(S_{r}\right)_{r\geq0}$
is strongly continuous, the spectral theorem gives a positive self-adjoint
operator $B$ on $H_{F}$ such that 
\begin{equation}
S_{r}=e^{-rB},\qquad r\geq0.\label{eq:4-8}
\end{equation}
In this notation, the tangent cocycle identity becomes 
\begin{equation}
w_{r+t}=w_{r}+e^{-rB}w_{t}.\label{eq:4-9}
\end{equation}
The pair $\left(B,w\right)$ is the hidden tangent data associated
with $F$.
\end{cor}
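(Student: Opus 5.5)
The corollary follows from the spectral theory of strongly continuous self-adjoint contraction semigroups, applied to the semigroup supplied by \prettyref{thm:4-2}. By that theorem, $(S_{r})_{r\geq0}$ is a semigroup on $H_{F}$ with $S_{0}=I_{H_{F}}$ and $0\leq S_{r}\leq I_{H_{F}}$ for every $r$. Together with the strong continuity assumed here, this is enough to produce the generator and its spectral form.

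The first step is to obtain the generator. By the Hille--Yosida theorem, $(S_{r})$ has a closed, densely defined generator $-B$, with
\[
Bx=\lim_{r\downarrow0}r^{-1}(x-S_{r}x)
\]
on its natural domain. Since each $S_{r}$ is self-adjoint, the generator is self-adjoint. This is Stone's theorem in its semigroup form: the resolvent $(\lambda+B)^{-1}=\int_{0}^{\infty}e^{-\lambda r}S_{r}\,dr$, for $\lambda>0$, is a self-adjoint operator. Since $\|S_{r}\|\leq1$, $\|(\lambda+B)^{-1}\|\leq\lambda^{-1}$, and therefore $B\geq0$. (Alternatively, $\langle x,(x-S_{r}x)\rangle\geq0$ on the domain gives $B\geq0$ directly.)

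The second step is to identify $S_{r}$ with $e^{-rB}$. With $B$ in hand, I would compare $(S_{r})$ with the functional-calculus semigroup $(e^{-rB})$. Both semigroups are strongly continuous and have the same generator, so they coincide; this is the standard uniqueness of a $C_{0}$-semigroup given its generator. That proves \prettyref{eq:4-8}.

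A more hands-on route avoids the generator theorem. Let $\Pi$ be the joint spectral measure of the commuting self-adjoint family $\{S_{r}\}$, or simply the spectral measure of $S_{1}$. Write $S_{1}=\int_{[0,1]}\lambda\,d\Pi(\lambda)$. Strong continuity forces $\ker S_{1}=\{0\}$: if $S_{1}x=0$, then $S_{1/2^{n}}x=0$ for all $n$, because $S_{1/2^{n}}$ is positive and its $2^{n}$-th power is $S_{1}$. Letting $n\to\infty$ gives $x=0$. We may therefore set $B=-\log S_{1}\geq0$. The relations $S_{r}^{2}=S_{2r}$ and uniqueness of positive square roots identify $S_{r}=e^{-rB}$ at dyadic $r$, and continuity extends this to all $r\geq0$. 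I expect this identification to be the only point needing any care.

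The last step is to read off \prettyref{eq:4-9}, which needs no further argument: substitute \prettyref{eq:4-8} into the cocycle identity \prettyref{eq:4-3} of \prettyref{thm:4-2}.
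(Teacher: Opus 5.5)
Your proposal is correct and takes essentially the paper's route. The paper gives no separate argument: it invokes the spectral theorem for the strongly continuous positive self-adjoint contraction semigroup of \prettyref{thm:4-2} to write $S_{r}=e^{-rB}$, and reads \prettyref{eq:4-9} off from \prettyref{eq:4-3}. Your generator argument, and alternatively $B=-\log S_{1}$ with $\ker S_{1}=\{0\}$ from strong continuity and the dyadic square-root identification, supply the standard details behind that appeal.
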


The preceding conclusions are operator-valued and do not require differentiability
of the tangent kernel. In the scalar smooth case, they reduce to a
more familiar condition: the mixed derivative of the tangent kernel
is a positive Hankel kernel.
\begin{prop}
\label{prop:4-7} Suppose $K=\mathbb{C}$ and $F$ is twice continuously
differentiable on $\left(0,\infty\right)^{2}$. Then there is a function
$\varphi$ on $\left(0,\infty\right)$ such that 
\[
\frac{\partial^{2}F}{\partial s\partial t}\left(s,t\right)=\varphi\left(s+t\right).
\]
Moreover, the kernel 
\[
\left(s,t\right)\mapsto\varphi\left(s+t\right)
\]
is positive.
\end{prop}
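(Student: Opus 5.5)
The plan is to read Proposition~\ref{prop:4-7} off the transport identity \prettyref{eq:4-7} of \prettyref{cor:4-4}. The positivity will come from the difference-quotient kernels, which are Gram kernels in $H_F$. We are in the setting of this section, so $F$ is a leakage tangent, or more generally a cocycle kernel; the proposition is meant for such $F$. Hence \prettyref{eq:4-7} and the identity $F(s,t)=\langle w_{s}1,w_{t}1\rangle$ in $H_{F}$ are available. The scalar case $K=\mathbb{C}$ lets us write $w_{t}=w_{t}1\in H_{F}$.

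\textbf{Step 1 (Hankel form).} Differentiate \prettyref{eq:4-7} in $s$ at points with $s,t>0$ and $r\geq0$:
\[
\partial_{1}F(r+s,t)=\partial_{1}F(s,r+t)\quad\text{for }s,t>0,\ r\geq0 .
\]
Set $G=\partial_{1}F$. Then $G(s+r,t)=G(s,t+r)$, so $G(s,t)$ depends only on $s+t$ along each anti-diagonal. Indeed, for $s'+t'=s+t$ with $s'\geq s$, take $r=s'-s$. Differentiating once more, now in $t$, which is legitimate since $F\in C^{2}$, shows that $\partial_{2}\partial_{1}F(s,t)$ is constant on anti-diagonals within $(0,\infty)^{2}$. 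Define $\varphi(u)=\partial_{1}\partial_{2}F(u/2,u/2)$. By equality of mixed partials, this gives $\partial^{2}F/\partial s\partial t=\varphi(s+t)$.

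\textbf{Step 2 (positivity).} For $h>0$ and $s,t>0$, the second difference is a Gram kernel:
\[
\frac{F(s+h,t+h)-F(s+h,t)-F(s,t+h)+F(s,t)}{h^{2}}=\Big\langle \tfrac{w_{s+h}-w_{s}}{h},\tfrac{w_{t+h}-w_{t}}{h}\Big\rangle_{H_{F}}.
\]
This kernel is therefore positive for every fixed $h$. The inner product is taken linear in the second slot, matching $F(s,t)=w_{s}^{*}w_{t}$. By the mean value theorem applied twice, using the $C^{2}$ hypothesis, the left side converges pointwise to $\partial_{s}\partial_{t}F(s,t)=\varphi(s+t)$ as $h\downarrow0$. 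Pointwise limits of positive kernels are positive, since each finite matrix $\big(\cdot\big)_{i,j}$ converges entrywise. Hence $(s,t)\mapsto\varphi(s+t)$ is positive on $(0,\infty)^{2}$.

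\textbf{Main obstacle.} The only delicate point is the bookkeeping in Step 1. We must make sure that the differentiated identity is used only at interior points, i.e.\ $s,t>0$, where $F$ is $C^{2}$; the boundary values $F(s,r)$ appearing in \prettyref{eq:4-7} are harmless because they do not depend on the variable being differentiated. Transitivity along the whole anti-diagonal also needs checking: any two points with the same sum are connected by a single shift $r\geq0$ in the appropriate direction, using symmetry of the relation. An alternative route, if one prefers, is to write $\partial_{t}w_{t}$ via $S_{r}$ and get $\varphi(s+t)=\langle S_{s}\xi,S_{t}\xi\rangle=\langle \xi,S_{s+t}\xi\rangle$ formally. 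This would give positivity directly from $S_{r}\geq0$, but it requires differentiability of $t\mapsto w_{t}$ in $H_{F}$. For that reason the difference-quotient argument above is preferable.
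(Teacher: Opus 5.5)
Your Step~1 contains a false identity. Equation \eqref{eq:4-7} reads $F(r+s,t)-F(r,t)=F(s,r+t)-F(s,r)$. Differentiating it in $s$ alone removes $F(r,t)$, but it does not remove $F(s,r)$, which depends on $s$. The correct identity is
\[
\partial_{1}F(r+s,t)=\partial_{1}F(s,r+t)-\partial_{1}F(s,r),
\]
so $G=\partial_{1}F$ is \emph{not} constant on anti-diagonals. A concrete counterexample is the quadratic tangent $F(s,t)=st$, the $a(\varepsilon)=\varepsilon^{2}$ limit in \prettyref{prop:3-5} with $\Gamma^{*}\Gamma=1$. It satisfies \eqref{eq:4-7}, yet $\partial_{1}F(s,t)=t$, so $\partial_{1}F(r+s,t)=t\neq r+t=\partial_{1}F(s,r+t)$ for $r>0$. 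Your ``main obstacle'' paragraph asserts exactly the wrong thing. $F(r,t)$ is independent of $s$, but $F(s,r)$ is independent only of $t$, so you need one derivative in each variable to remove both boundary terms.

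The repair is immediate and recovers the paper's argument. The extra term $-\partial_{1}F(s,r)$ does not depend on $t$, so differentiating the corrected identity in $t$ gives $\partial_{2}\partial_{1}F(r+s,t)=\partial_{2}\partial_{1}F(s,r+t)$ for $s,t>0$, $r\geq0$. Your anti-diagonal transitivity argument then applies to the mixed derivative directly; the paper simply differentiates \eqref{eq:4-7} in $s$ and $t$ at once.

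Step~2 is correct and matches the paper's $G_{\delta}$ argument. The paper additionally rewrites $G_{\delta}(s,t)=w_{\delta}^{*}S_{s+t}w_{\delta}$, but your direct Gram form $\langle w_{s+h}-w_{s},w_{t+h}-w_{t}\rangle$ already gives positivity. That form shows positivity of $\partial_{s}\partial_{t}F$ holds for any positive $C^{2}$ kernel; only the Hankel form needs the cocycle structure.
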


\begin{proof}
Differentiating the transport identity \prettyref{eq:4-7} in $s$
and $t$ gives 
\[
\frac{\partial^{2}F}{\partial s\partial t}\left(r+s,t\right)=\frac{\partial^{2}F}{\partial s\partial t}\left(s,r+t\right).
\]
Hence $\frac{\partial^{2}F}{\partial s\partial t}\left(s,t\right)$
depends only on $s+t$. Write 
\[
\frac{\partial^{2}F}{\partial s\partial t}\left(s,t\right)=\varphi\left(s+t\right).
\]

For $\delta>0$, define 
\[
G_{\delta}\left(s,t\right)=F\left(s+\delta,t+\delta\right)-F\left(s+\delta,t\right)-F\left(s,t+\delta\right)+F\left(s,t\right).
\]
Using the cocycle identity, 
\[
w_{s+\delta}-w_{s}=S_{s}w_{\delta}.
\]
Therefore 
\[
G_{\delta}\left(s,t\right)=\left(w_{s+\delta}-w_{s}\right)^{*}\left(w_{t+\delta}-w_{t}\right)=w^{*}_{\delta}S_{s+t}w_{\delta}.
\]
The kernel $G_{\delta}$ is positive. Indeed, for $s_{1},\ldots,s_{n}>0$
and $c_{1},\ldots,c_{n}\in\mathbb{C}$, 
\[
\sum^{n}_{i,j=1}\overline{c_{i}}c_{j}G_{\delta}\left(s_{i},s_{j}\right)=\Big\Vert\sum^{n}_{i=1}c_{i}S_{s_{i}}w_{\delta}\Big\Vert^{2}\geq0.
\]
Since $F$ is $C^{2}$, 
\[
\frac{1}{\delta^{2}}G_{\delta}\left(s,t\right)\to\varphi\left(s+t\right)
\]
locally uniformly on $\left(0,\infty\right)^{2}$ as $\delta\downarrow0$.
Hence $\left(s,t\right)\mapsto\varphi\left(s+t\right)$ is positive. 
\end{proof}

\begin{cor}
\label{cor:4-8} Assume, in addition to the hypotheses of \prettyref{prop:4-7},
that $\varphi$ is completely monotone. Then there is a positive measure
$\nu$ on $\left[0,\infty\right)$ such that 
\[
\varphi\left(u\right)=\int^{\infty}_{0}e^{-ux}\,d\nu\left(x\right),\qquad u>0.
\]
If the integral below is finite for the relevant values of $s$ and
$t$, then 
\[
F\left(s,t\right)=\int^{\infty}_{0}\frac{\left(1-e^{-sx}\right)\left(1-e^{-tx}\right)}{x^{2}}\,d\nu\left(x\right),
\]
with the integrand interpreted as $st$ at $x=0$. 
\end{cor}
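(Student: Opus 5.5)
The argument has two steps: Bernstein's theorem gives the measure, and then $F$ is recovered from its mixed derivative $\varphi(s+t)$ by integrating twice from the boundary, where $F$ vanishes.

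\emph{Step 1: the measure.} By Bernstein's theorem, a completely monotone function on $(0,\infty)$ has the form
\[
\varphi(u)=\int_{[0,\infty)}e^{-ux}\,d\nu(x),\qquad u>0,
\]
for a unique positive Borel measure $\nu$. The integral is finite for every $u>0$, so $\nu$ may be infinite near $x=\infty$ but is always tempered by the exponential.

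\emph{Step 2: reduction to an integral of $\varphi$.} I would use the boundary conditions $F(0,t)=F(s,0)=0$ together with the fundamental theorem of calculus in each variable. Fix $0<\delta<s$ and $0<\delta<t$. On $[\delta,s]\times[\delta,t]$, which lies in $(0,\infty)^{2}$, $F$ is $C^{2}$, so
\[
F(s,t)-F(\delta,t)-F(s,\delta)+F(\delta,\delta)=\int_{\delta}^{s}\int_{\delta}^{t}\varphi(u+v)\,dv\,du .
\]

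\emph{Step 3: letting $\delta\downarrow0$.} The left side tends to $F(s,t)$, using continuity of $F$ up to the axes and the boundary values. Continuity holds because $F$ is a locally uniform limit of the continuous kernels $a(\varepsilon)^{-1}E_{\varepsilon s,\varepsilon t}$; equivalently, Theorem 4.2 and $w_0=0$ give it. On the right side the integrand is positive, so monotone convergence applies and
\[
F(s,t)=\int_{0}^{s}\int_{0}^{t}\varphi(u+v)\,dv\,du ,
\]
with the double integral finite because the left side is.

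\emph{Step 4: Tonelli.} Substitute Bernstein's representation and interchange the order of integration; everything is nonnegative, so Tonelli applies:
\[
F(s,t)=\int_{[0,\infty)}\left(\int_0^s e^{-ux}\,du\right)\left(\int_0^t e^{-vx}\,dv\right)d\nu(x)=\int\frac{(1-e^{-sx})(1-e^{-tx})}{x^{2}}\,d\nu(x).
\]
For $x>0$ the inner integrals equal $(1-e^{-sx})/x$ and $(1-e^{-tx})/x$. At $x=0$ they equal $s$ and $t$, which is exactly the stated interpretation of the integrand as $st$ at $x=0$, including any atom of $\nu$ at $0$. Tonelli also shows the final integral is finite whenever the double integral is. The statement's proviso "if the integral is finite" is therefore automatically met once $F(s,t)$ is finite.

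\emph{Main obstacle.} The delicate point is Step 3: justifying the limit $\delta\downarrow0$ at the boundary, where $\varphi$ may blow up as $u\downarrow0$ and $F$ is only assumed $C^{2}$ on the open quadrant. What makes it work is that $\varphi\ge0$, so monotone convergence applies, combined with continuity of $F$ at the axes. If continuity of $F$ at the axes were doubtful, I would instead obtain it from $F(s,t)=w_s^*w_t$ and the strong continuity of $t\mapsto w_t$ implied by local uniform convergence.
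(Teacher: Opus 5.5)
Your proof is correct and follows the paper's route: Bernstein's theorem gives $\nu$, and $F$ is recovered by integrating $\varphi(s+t)$ twice from the axes, where $F$ vanishes. You also supply details the paper leaves implicit (the $\delta$-cutoff, monotone convergence near the origin, Tonelli), and these show that the finiteness proviso in the statement is automatic.

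One caution: continuity of $F$ at the axes comes from $F$ being a locally uniform limit of the continuous scalar kernels $a(\varepsilon)^{-1}E_{\varepsilon s,\varepsilon t}$, as in your main justification. It does not follow from \prettyref{thm:4-2} and $w_{0}=0$ alone. For example, $F\equiv c>0$ on the open quadrant and $0$ on the axes, with $S_{r}=0$ for $r>0$, satisfies the cocycle structure. So your ``equivalently'' and fallback remarks should be dropped.
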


\begin{proof}
By Bernstein's theorem for completely monotone functions, $\varphi$
is the Laplace transform of a positive measure $\nu$ on $\left[0,\infty\right)$;
see, for example, \cite{MR2978140} or \cite[Chapter IV]{MR5923}.
Thus 
\[
\frac{\partial^{2}F}{\partial s\partial t}\left(s,t\right)=\int^{\infty}_{0}e^{-\left(s+t\right)x}\,d\nu\left(x\right).
\]
Integrating first in $s$ and then in $t$, and using $F\left(0,t\right)=F\left(s,0\right)=0$,
gives the stated formula. 
\end{proof}

\subsection{Spectral cocycle models}

We now give a class of cocycle kernels coming from positive spectral
data. The construction is operator-valued from the start, and the
fractional kernels appear as a special case. This is a tangent-level
construction: it describes kernels after the rescaled limit has already
been taken. It does not, by itself, solve the separate realization
problem of deciding which cocycle kernels arise from pre-limit compression
data $\left(A,P,a\right)$. That problem would require constructing
a larger semigroup $T_{t}=e^{-tA}$ whose off-diagonal block has the
prescribed cocycle as its tangent.

In the scalar smooth case, \prettyref{cor:4-8} represents the mixed
derivative by a Laplace measure. Here we work directly with the kernel.
The factor produced by integrating twice is absorbed into the positive
spectral measure $\Sigma$.

Let $\Sigma$ be a positive sesquilinear measure on $\left(0,\infty\right)$
with values on $K$, meaning that each $\Sigma_{h,k}$ is a complex
measure, $\Sigma_{h,h}$ is positive, and 
\[
\Sigma_{h,k}\left(\Omega\right)=\overline{\Sigma_{k,h}\left(\Omega\right)}
\]
for all Borel sets $\Omega\subset\left(0,\infty\right)$. Assume that,
for every $s,t\geq0$, the formula 
\begin{equation}
\left\langle h,F_{\Sigma}\left(s,t\right)k\right\rangle =\int^{\infty}_{0}\left(1-e^{-sx}\right)\left(1-e^{-tx}\right)d\Sigma_{h,k}\left(x\right)\label{eq:4-10}
\end{equation}
defines a bounded operator $F_{\Sigma}\left(s,t\right)\in B\left(K\right)$.
\begin{prop}
\label{prop:4-5} The kernel $F_{\Sigma}$ is a cocycle kernel (see
\prettyref{def:4-5}). 
\end{prop}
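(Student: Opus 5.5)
The plan is to realize the Kolmogorov space of $F_{\Sigma}$ concretely inside an $L^{2}$-space of the measure $\Sigma$. On that space the semigroup becomes multiplication by $e^{-rx}$, and everything in Definition \ref{def:4-5} then follows from properties of multiplication operators.

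\emph{Step 1: the model space.} Let $\mathcal{M}$ be the completion of the span of the functions $f(x)=\varphi(x)h$, with $\varphi$ a bounded Borel function on $(0,\infty)$ and $h\in K$. The inner product is
\[
\langle \varphi h,\psi k\rangle=\int_{0}^{\infty}\overline{\varphi(x)}\psi(x)\,d\Sigma_{h,k}(x),
\]
extended sesquilinearly; it is positive semidefinite because $\Sigma$ is a positive sesquilinear measure. We quotient by null vectors and complete. Since $F_{\Sigma}(t,t)$ is bounded, $\int(1-e^{-tx})^{2}\,d\Sigma_{h,h}<\infty$. I will write $w^{\Sigma}_{t}h:=(1-e^{-tx})h\in\mathcal{M}$; this is legitimate because $1-e^{-tx}$ is bounded. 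By \eqref{eq:4-10},
\[
\langle w^{\Sigma}_{s}h,w^{\Sigma}_{t}k\rangle=\langle h,F_{\Sigma}(s,t)k\rangle .
\]
It follows that $F_{\Sigma}$ is a positive kernel and that $F_{\Sigma}(0,t)=F_{\Sigma}(t,0)=0$. It also follows that $[t,h]\mapsto w^{\Sigma}_{t}h$ induces an isometry $U$ from $H_{F_{\Sigma}}$ onto the closed span $\mathcal{N}$ of the vectors $w^{\Sigma}_{t}h$.

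\emph{Step 2: the multiplication semigroup.} For $r\ge0$ let $M_{r}$ be multiplication by $e^{-rx}$ on $\mathcal{M}$. Because $0\le e^{-rx}\le1$, the Gram forms satisfy $\langle M_{r}f,M_{r}f\rangle\le\langle f,f\rangle$ and $\langle f,M_{r}f\rangle\ge0$. To check this, write $f=\sum\varphi_{i}h_{i}$. The relevant matrices of measures $[\Sigma_{h_{i},h_{j}}]$ are positive, so integrating a nonnegative function against the scalar positive measure $\sum\overline{\varphi_{i}}\varphi_{j}\,d\Sigma_{h_{i},h_{j}}$ gives a nonnegative value. Consequently $M_{r}$ is a well-defined positive self-adjoint contraction, $M_{0}=I$, and $M_{r}M_{q}=M_{r+q}$. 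The key identity is
\[
e^{-rx}(1-e^{-tx})=(1-e^{-(r+t)x})-(1-e^{-rx}),
\]
that is, $M_{r}w^{\Sigma}_{t}h=w^{\Sigma}_{r+t}h-w^{\Sigma}_{r}h$. So $M_{r}$ leaves $\mathcal{N}$ invariant, and since $M_{r}$ is self-adjoint, $\mathcal{N}$ reduces $M_{r}$.

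\emph{Step 3: transfer.} Set $S_{r}:=U^{*}M_{r}|_{\mathcal{N}}U$. Then $S_{r}w_{t}h=w_{r+t}h-w_{r}h$. Well-definedness of the formula on the quotient is automatic, because the formula is implemented by a bounded operator. Positivity, self-adjointness, contractivity and the semigroup law all pass to compressions to reducing subspaces, so $F_{\Sigma}$ is a cocycle kernel in the sense of Definition \ref{def:4-5}.

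The main obstacle is the measure-theoretic rigor of Step 1 for a general operator-valued sesquilinear $\Sigma$, which is not assumed countably additive in norm. The points to secure are that the form on $\mathcal{M}$ is positive semidefinite and that $M_{r}$ is contractive. I would reduce both to finitely many vectors $h_{1},\dots,h_{n}$: the matrix $[\Sigma_{h_{i},h_{j}}]$ is then a positive $n\times n$ matrix measure, and it has a density with respect to $\mu=\sum_{i}\Sigma_{h_{i},h_{i}}$ that is a positive semidefinite matrix $\mu$-a.e. With this density both estimates become pointwise matrix inequalities. Everything else is algebra.
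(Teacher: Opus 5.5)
Your proposal is correct and follows essentially the same route as the paper: an $L^{2}(\Sigma)$-type model space, the multiplication semigroup $f\mapsto e^{-rx}f$, and the scalar identity $1-e^{-(r+t)x}=(1-e^{-rx})+e^{-rx}(1-e^{-tx})$, with your explicit transfer to $H_{F_{\Sigma}}$ (via the unitary onto the reducing subspace $\mathcal{N}$) supplying a step the paper leaves implicit. One adjustment is needed: the paper's own example $c_{\beta}x^{-1-\beta}dx$ shows that infinite measures are intended, so in Step 1 you should restrict to bounded Borel $\varphi$ with $|\varphi(x)|\le C(1-e^{-x})$; this class is stable under multiplication by $e^{-rx}$, contains every $1-e^{-tx}$, and keeps all integrals finite because $\int(1-e^{-x})^{2}\,d\Sigma_{h,h}=\langle h,F_{\Sigma}(1,1)h\rangle<\infty$.
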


\begin{proof}
Let $M_{\Sigma}$ be the Hilbert space obtained as follows. Start
with the vector space of simple $K$-valued functions $f$ on $\left(0,\infty\right)$
such that 
\[
\int^{\infty}_{0}d\Sigma_{f\left(x\right),f\left(x\right)}\left(x\right)<\infty.
\]
On this space define 
\[
\left\langle f,g\right\rangle _{M_{\Sigma}}=\int^{\infty}_{0}d\Sigma_{f\left(x\right),g\left(x\right)}\left(x\right).
\]
Quotient by the null space and complete.

For $r\geq0$, define 
\[
S_{r}f\left(x\right)=e^{-rx}f\left(x\right).
\]
Then $\left(S_{r}\right)_{r\geq0}$ is a positive self-adjoint contraction
semigroup on $M_{\Sigma}$.

For $t\geq0$ and $h\in K$, define 
\[
w_{t}h\left(x\right)=\left(1-e^{-tx}\right)h.
\]
Then 
\[
\left\Vert w_{t}h\right\Vert ^{2}_{M_{\Sigma}}=\int^{\infty}_{0}\left(1-e^{-tx}\right)^{2}d\Sigma_{h,h}\left(x\right)=\left\langle h,F_{\Sigma}\left(t,t\right)h\right\rangle .
\]
Hence $w_{t}$ is a bounded map from $K$ into $M_{\Sigma}$. By polarization,
\[
w^{*}_{s}w_{t}=F_{\Sigma}\left(s,t\right).
\]
Moreover, 
\[
1-e^{-\left(r+t\right)x}=\left(1-e^{-rx}\right)+e^{-rx}\left(1-e^{-tx}\right).
\]
Therefore 
\[
w_{r+t}=w_{r}+S_{r}w_{t}.
\]
Hence $F_{\Sigma}$ is a cocycle kernel. 
\end{proof}

The preceding measure model includes the usual spectral theorem picture
as a special case. Let $M$ be a Hilbert space, let $B\geq0$ be self-adjoint
on $M$, and let $R:K\to M$ be bounded. Define 
\[
w_{t}=\left(I_{M}-e^{-tB}\right)R.
\]
Then 
\[
w_{r+t}=w_{r}+e^{-rB}w_{t}
\]
and 
\[
F_{B,R}\left(s,t\right)=R^{*}\left(I_{M}-e^{-sB}\right)\left(I_{M}-e^{-tB}\right)R
\]
is a cocycle kernel. In the spectral representation of $B$, this
is the case where 
\[
d\Sigma_{h,k}\left(x\right)=d\left\langle Rh,E_{B}\left(x\right)Rk\right\rangle .
\]

The measure formulation in \prettyref{prop:4-5} is more flexible
because it allows singular entrance data. In such cases $w_{t}$ is
defined for $t>0$, although there need not be a bounded vector $Rh$
at time zero.
\begin{example}
Let $0<\beta<1$, let $Q\in B\left(K\right)$ with $Q\geq0$, and
set 
\[
d\Sigma_{h,k}\left(x\right)=c_{\beta}\left\langle Q^{1/2}h,Q^{1/2}k\right\rangle x^{-1-\beta}dx,\qquad c_{\beta}=\frac{\beta}{\Gamma\left(1-\beta\right)}.
\]
Then \prettyref{eq:4-10} gives 
\[
F_{\beta,Q}\left(s,t\right)=\left(s^{\beta}+t^{\beta}-\left(s+t\right)^{\beta}\right)Q.
\]
Indeed, 
\[
s^{\beta}+t^{\beta}-\left(s+t\right)^{\beta}=c_{\beta}\int^{\infty}_{0}\left(1-e^{-sx}\right)\left(1-e^{-tx}\right)x^{-1-\beta}dx.
\]
The corresponding Kolmogorov space is 
\[
L^{2}\left(\left(0,\infty\right),x^{-1-\beta}dx\right)\otimes\overline{ranQ^{1/2}},
\]
with 
\[
S_{r}f\left(x\right)=e^{-rx}f\left(x\right)
\]
and 
\[
w_{t}h\left(x\right)=c^{1/2}_{\beta}\left(1-e^{-tx}\right)Q^{1/2}h.
\]
Thus the fractional kernel is a spectral cocycle kernel. It is the
case where the spectral measure has density $c_{\beta}x^{-1-\beta}Q$
in the quadratic form sense. 
\end{example}

\section{Dual tangent covariance}\label{sec:5}

The preceding construction used the upper-left covariance 
\[
E_{s,t}=V^{*}_{s}V_{t}
\]
to obtain the tangent kernel $F$. The lower-right covariance from
\prettyref{sec:3} also has a tangent form, but not as an operator
limit on the original space $P^{\perp}H$. After testing against vectors
whose norms are measured at the tangent scale, the canonical limit
lives on the tangent Kolmogorov space $H_{F}$.

For $s,t\geq0$, define 
\[
\Theta_{s,t}=w_{s}w^{*}_{t}\in B\left(H_{F}\right).
\]
Thus $\Theta$ is the hidden-side covariance kernel associated with
the tangent maps $w_{t}:K\to H_{F}$.
\begin{thm}
\label{thm:5-1} Assume that $F$ is an $a$-scale leakage tangent
for $\left(A,P\right)$. Let $H_{F}$ be its Kolmogorov space, let
$w_{t}:K\to H_{F}$ be the canonical maps, and let $\left(S_{r}\right)_{r\geq0}$
be the semigroup from \prettyref{thm:4-2}. Let 
\[
x=\sum_{i}w_{p_{i}}h_{i},\qquad y=\sum_{j}w_{q_{j}}k_{j}
\]
be finite linear combinations in $H_{F}$, and choose the corresponding
lifts 
\[
x_{\varepsilon}=\sum_{i}W_{\varepsilon}\left(p_{i}\right)h_{i},\qquad y_{\varepsilon}=\sum_{j}W_{\varepsilon}\left(q_{j}\right)k_{j}
\]
in $P^{\perp}H$. Then, for all $s,t\geq0$, 
\begin{equation}
\lim_{\varepsilon\downarrow0}\left\langle x_{\varepsilon},a\left(\varepsilon\right)^{-1}\left(D_{\varepsilon\left(s+t\right)}-D_{\varepsilon s}D_{\varepsilon t}\right)y_{\varepsilon}\right\rangle =\left\langle x,\Theta_{s,t}y\right\rangle .\label{eq:5-1}
\end{equation}
Moreover, for all $r,q,s,t\geq0$, 
\begin{equation}
\lim_{\varepsilon\downarrow0}\left\langle D_{\varepsilon r}x_{\varepsilon},a\left(\varepsilon\right)^{-1}\left(D_{\varepsilon\left(s+t\right)}-D_{\varepsilon s}D_{\varepsilon t}\right)D_{\varepsilon q}y_{\varepsilon}\right\rangle =\left\langle S_{r}x,\Theta_{s,t}S_{q}y\right\rangle .\label{eq:5-2}
\end{equation}
\end{thm}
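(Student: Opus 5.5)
The plan is to reduce everything to the block identity \prettyref{eq:3-3}, $D_{s+t}-D_{s}D_{t}=V_{s}V^{*}_{t}$, applied at times $\varepsilon s,\varepsilon t$. Dividing by $a(\varepsilon)$ gives
\[
a(\varepsilon)^{-1}\left(D_{\varepsilon(s+t)}-D_{\varepsilon s}D_{\varepsilon t}\right)=W_{\varepsilon}(s)W_{\varepsilon}(t)^{*}.
\]
The left side of \prettyref{eq:5-1} therefore becomes $\left\langle W_{\varepsilon}(s)^{*}x_{\varepsilon},W_{\varepsilon}(t)^{*}y_{\varepsilon}\right\rangle _{K}$. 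Since $x_{\varepsilon}$ is a finite sum, we have
\[
W_{\varepsilon}(s)^{*}x_{\varepsilon}=\sum_{i}a(\varepsilon)^{-1}E_{\varepsilon s,\varepsilon p_{i}}h_{i},
\]
and this converges in norm in $K$ to $\sum_{i}F(s,p_{i})h_{i}=w_{s}^{*}x$, by the local uniform convergence in \prettyref{eq:4-1}. The same argument gives $W_{\varepsilon}(t)^{*}y_{\varepsilon}\to w_{t}^{*}y$. The inner product of two norm-convergent sequences converges, so the limit is
\[
\left\langle w_{s}^{*}x,w_{t}^{*}y\right\rangle =\left\langle x,w_{s}w_{t}^{*}y\right\rangle =\left\langle x,\Theta_{s,t}y\right\rangle ,
\]
which proves \prettyref{eq:5-1}.

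For \prettyref{eq:5-2}, I would first replace $D_{\varepsilon r}x_{\varepsilon}$ by an explicit lift of $S_{r}x$. The proof of \prettyref{thm:4-2} shows that the vector
\[
\tilde{x}_{\varepsilon}:=\sum_{i}\left(W_{\varepsilon}(r+p_{i})h_{i}-W_{\varepsilon}(r)h_{i}\right)
\]
satisfies $\left\Vert \tilde{x}_{\varepsilon}-D_{\varepsilon r}x_{\varepsilon}\right\Vert \to0$. This rests on \prettyref{eq:4-4}, the uniform boundedness of $W_{\varepsilon}(r)$ for small $\varepsilon$, and the strong continuity of $C$ at $0$. Moreover, $\tilde{x}_{\varepsilon}$ is exactly the canonical lift, in the sense of the statement, of the finite combination $S_{r}x=\sum_{i}(w_{r+p_{i}}h_{i}-w_{r}h_{i})$. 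Similarly, $D_{\varepsilon q}y_{\varepsilon}$ differs by $o(1)$ from the lift $\tilde{y}_{\varepsilon}$ of $S_{q}y$.

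Next I would control the middle operator $a(\varepsilon)^{-1}(D_{\varepsilon(s+t)}-D_{\varepsilon s}D_{\varepsilon t})=W_{\varepsilon}(s)W_{\varepsilon}(t)^{*}$. Its norm is at most $\Vert W_{\varepsilon}(s)\Vert\,\Vert W_{\varepsilon}(t)\Vert$, which is bounded for small $\varepsilon$ because $\Vert W_{\varepsilon}(u)\Vert^{2}=\Vert a(\varepsilon)^{-1}E_{\varepsilon u,\varepsilon u}\Vert\to\Vert F(u,u)\Vert$. The vectors $D_{\varepsilon r}x_{\varepsilon}$ and $\tilde{y}_{\varepsilon}$ are also bounded in norm, since their squared norms converge by the Gram-kernel convergence. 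Hence replacing $D_{\varepsilon r}x_{\varepsilon}$ by $\tilde{x}_{\varepsilon}$ and $D_{\varepsilon q}y_{\varepsilon}$ by $\tilde{y}_{\varepsilon}$ changes the pairing by $o(1)$. Applying \prettyref{eq:5-1} to the finite combinations $S_{r}x$ and $S_{q}y$ with these lifts then yields $\left\langle S_{r}x,\Theta_{s,t}S_{q}y\right\rangle$.

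The main point needing care is this uniform boundedness of the middle operator and of the lifts. Without it, the $o(1)$ discrepancy between $D_{\varepsilon r}x_{\varepsilon}$ and the lift of $S_{r}x$ would not pass through the pairing. The norm estimate via $\Vert W_{\varepsilon}(u)\Vert^{2}\to\Vert F(u,u)\Vert$ resolves it.
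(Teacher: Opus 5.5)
Your proposal is correct and follows the paper's proof. Part \prettyref{eq:5-1} is argued identically, via $a(\varepsilon)^{-1}(D_{\varepsilon(s+t)}-D_{\varepsilon s}D_{\varepsilon t})=W_{\varepsilon}(s)W_{\varepsilon}(t)^{*}$, and for \prettyref{eq:5-2} both arguments use \prettyref{eq:4-4}, strong continuity of $C$ at $0$, and uniform boundedness of $W_{\varepsilon}$; the only difference is bookkeeping: the paper applies $W_{\varepsilon}(s)^{*}$ first to get $W_{\varepsilon}(s)^{*}D_{\varepsilon r}x_{\varepsilon}\to w_{s}^{*}S_{r}x$ in $K$, while you approximate $D_{\varepsilon r}x_{\varepsilon}$ in $P^{\perp}H$ by the lift of $S_{r}x$ and reduce to \prettyref{eq:5-1}, which your norm bounds fully justify.
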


\begin{proof}
By \prettyref{eq:3-3}, 
\[
D_{\varepsilon\left(s+t\right)}-D_{\varepsilon s}D_{\varepsilon t}=V_{\varepsilon s}V^{*}_{\varepsilon t}.
\]
Since 
\[
W_{\varepsilon}\left(u\right)=a\left(\varepsilon\right)^{-1/2}V_{\varepsilon u},
\]
we have 
\[
a\left(\varepsilon\right)^{-1}\left(D_{\varepsilon\left(s+t\right)}-D_{\varepsilon s}D_{\varepsilon t}\right)=W_{\varepsilon}\left(s\right)W_{\varepsilon}\left(t\right)^{*}.
\]
Therefore 
\[
\begin{aligned} & \left\langle x_{\varepsilon},a\left(\varepsilon\right)^{-1}\left(D_{\varepsilon\left(s+t\right)}-D_{\varepsilon s}D_{\varepsilon t}\right)y_{\varepsilon}\right\rangle \\
 & \qquad=\left\langle x_{\varepsilon},W_{\varepsilon}\left(s\right)W_{\varepsilon}\left(t\right)^{*}y_{\varepsilon}\right\rangle \\
 & \qquad=\left\langle W_{\varepsilon}\left(s\right)^{*}x_{\varepsilon},W_{\varepsilon}\left(t\right)^{*}y_{\varepsilon}\right\rangle .
\end{aligned}
\]
The local uniform convergence of the Gram kernels gives 
\[
W_{\varepsilon}\left(s\right)^{*}x_{\varepsilon}=\sum_{i}W_{\varepsilon}\left(s\right)^{*}W_{\varepsilon}\left(p_{i}\right)h_{i}\to\sum_{i}F\left(s,p_{i}\right)h_{i}=w^{*}_{s}x
\]
in $K$. Similarly, 
\[
W_{\varepsilon}\left(t\right)^{*}y_{\varepsilon}\to w^{*}_{t}y.
\]
Hence the last inner product converges to 
\[
\left\langle w^{*}_{s}x,w^{*}_{t}y\right\rangle =\left\langle x,w_{s}w^{*}_{t}y\right\rangle =\left\langle x,\Theta_{s,t}y\right\rangle .
\]
This proves \prettyref{eq:5-1}.

For \prettyref{eq:5-2}, it is enough to show that 
\[
W_{\varepsilon}\left(s\right)^{*}D_{\varepsilon r}x_{\varepsilon}\to w^{*}_{s}S_{r}x
\]
for every finite lift $x_{\varepsilon}$, and similarly for $y$.
Using \prettyref{eq:4-4}, 
\[
D_{\varepsilon r}W_{\varepsilon}\left(p_{i}\right)=W_{\varepsilon}\left(r+p_{i}\right)-W_{\varepsilon}\left(r\right)C_{\varepsilon p_{i}}.
\]
Thus 
\[
\begin{aligned}W_{\varepsilon}\left(s\right)^{*}D_{\varepsilon r}x_{\varepsilon} & =\sum_{i}W_{\varepsilon}\left(s\right)^{*}W_{\varepsilon}\left(r+p_{i}\right)h_{i}\\
 & \quad-\sum_{i}W_{\varepsilon}\left(s\right)^{*}W_{\varepsilon}\left(r\right)C_{\varepsilon p_{i}}h_{i}.
\end{aligned}
\]
The first sum converges to 
\[
\sum_{i}F\left(s,r+p_{i}\right)h_{i}.
\]
The second sum converges to 
\[
\sum_{i}F\left(s,r\right)h_{i},
\]
because $C_{\varepsilon p_{i}}h_{i}\to h_{i}$ and $W_{\varepsilon}\left(s\right)^{*}W_{\varepsilon}\left(r\right)$
is uniformly bounded for small $\varepsilon$. Therefore 
\[
W_{\varepsilon}\left(s\right)^{*}D_{\varepsilon r}x_{\varepsilon}\to\sum_{i}\left(F\left(s,r+p_{i}\right)-F\left(s,r\right)\right)h_{i}=w^{*}_{s}S_{r}x.
\]
The same argument gives 
\[
W_{\varepsilon}\left(t\right)^{*}D_{\varepsilon q}y_{\varepsilon}\to w^{*}_{t}S_{q}y.
\]
Using again 
\[
a\left(\varepsilon\right)^{-1}\left(D_{\varepsilon\left(s+t\right)}-D_{\varepsilon s}D_{\varepsilon t}\right)=W_{\varepsilon}\left(s\right)W_{\varepsilon}\left(t\right)^{*},
\]
we obtain 
\[
\begin{aligned} & \left\langle D_{\varepsilon r}x_{\varepsilon},a\left(\varepsilon\right)^{-1}\left(D_{\varepsilon\left(s+t\right)}-D_{\varepsilon s}D_{\varepsilon t}\right)D_{\varepsilon q}y_{\varepsilon}\right\rangle \\
 & \qquad=\left\langle W_{\varepsilon}\left(s\right)^{*}D_{\varepsilon r}x_{\varepsilon},W_{\varepsilon}\left(t\right)^{*}D_{\varepsilon q}y_{\varepsilon}\right\rangle \\
 & \qquad\to\left\langle w^{*}_{s}S_{r}x,w^{*}_{t}S_{q}y\right\rangle \\
 & \qquad=\left\langle S_{r}x,w_{s}w^{*}_{t}S_{q}y\right\rangle =\left\langle S_{r}x,\Theta_{s,t}S_{q}y\right\rangle .
\end{aligned}
\]
This proves \prettyref{eq:5-2}. 
\end{proof}

The theorem gives the lower-right covariance in the tangent space.
We now state the corresponding increment identities. They show that
hidden increments are transported by $S_{r}$, and that the visible
increment kernels form a decreasing family.
\begin{cor}
\label{cor:5-2} For $r\geq0$, define 
\[
\Theta^{[r]}_{s,t}=\left(w_{r+s}-w_{r}\right)\left(w_{r+t}-w_{r}\right)^{*}
\]
and 
\[
F^{[r]}\left(s,t\right)=\left(w_{r+s}-w_{r}\right)^{*}\left(w_{r+t}-w_{r}\right).
\]
Then, for all $r,s,t\geq0$, 
\[
\Theta^{[r]}_{s,t}=S_{r}\Theta_{s,t}S_{r}
\]
and 
\[
F^{[r]}\left(s,t\right)=w^{*}_{s}S^{2}_{r}w_{t}.
\]
Consequently, the kernels $F^{[r]}$ form a decreasing family: 
\[
0\leq F^{[r+q]}\leq F^{[r]}\leq F
\]
for all $r,q\geq0$. Moreover, 
\[
\left(F^{[r]}\right)^{[q]}=F^{[r+q]},
\]
where $\left(F^{[r]}\right)^{[q]}$ denotes the shifted-increment
kernel formed from $F^{[r]}$ in the same way. 
\end{cor}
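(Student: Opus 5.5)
The whole argument runs through the cocycle identity \prettyref{eq:4-3} from \prettyref{thm:4-2}, namely $w_{r+t}-w_{r}=S_{r}w_{t}$ as maps $K\to H_{F}$. It is valid on generators by definition. It then holds as an operator identity because both sides are bounded, with $S_{r}$ a contraction and each $w_{t}$ bounded, since $\Vert w_{t}h\Vert^{2}=\langle h,F(t,t)h\rangle$.

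Substituting this into the definition of $\Theta^{[r]}$ gives
\[
\Theta^{[r]}_{s,t}=S_{r}w_{s}\left(S_{r}w_{t}\right)^{*}=S_{r}w_{s}w^{*}_{t}S^{*}_{r}=S_{r}\Theta_{s,t}S_{r},
\]
using $S_{r}=S_{r}^{*}$. In the same way,
\[
F^{[r]}(s,t)=\left(S_{r}w_{s}\right)^{*}S_{r}w_{t}=w_{s}^{*}S_{r}^{2}w_{t}.
\]
This reproduces the formula already obtained in the proof of \prettyref{cor:4-4}. Expanding the product shows that it agrees with \prettyref{eq:4-5}, so the two definitions of $F^{[r]}$ coincide.

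For the monotonicity, the semigroup law gives $S_{r+q}^{2}=S_{r}^{2}S_{q}^{2}=S_{r}S_{q}^{2}S_{r}$. Since $0\le S_{q}\le I$, the operator $S_{q}^{2}$ satisfies $0\le S_{q}^{2}\le I$, and conjugating by the self-adjoint $S_{r}$ yields $0\le S_{r+q}^{2}\le S_{r}^{2}\le I$. This step uses only that the $S$'s are positive contractions, so no commutation argument is needed. Next, for an operator inequality $X\le Y$ on $H_{F}$, the kernel $(s,t)\mapsto w_{s}^{*}(Y-X)w_{t}$ is positive, because
\[
\sum_{i,j}\langle h_{i},w_{s_{i}}^{*}(Y-X)w_{s_{j}}h_{j}\rangle=\Big\langle \sum_{i} w_{s_{i}}h_{i},(Y-X)\sum_{j} w_{s_{j}}h_{j}\Big\rangle\ge0.
\]
Applying this with the pairs $(X,Y)=(0,S_{r+q}^{2})$, $(S_{r+q}^{2},S_{r}^{2})$ and $(S_{r}^{2},I)$ gives $0\le F^{[r+q]}\le F^{[r]}\le F$.

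For the iterated increment, form $(F^{[r]})^{[q]}$ from \prettyref{eq:4-5} with $F$ replaced by $F^{[r]}$. This kernel factors as $\left(u_{q+s}-u_{q}\right)^{*}\left(u_{q+t}-u_{q}\right)$, where $u_{t}:=w_{r+t}-w_{r}$. The bilinear expansion in \prettyref{eq:4-5} is exactly this Gram expansion, and it holds for any factorization, not only the canonical one. Then
\[
u_{q+t}-u_{q}=w_{r+q+t}-w_{r+q},
\]
so $(F^{[r]})^{[q]}(s,t)=\left(w_{r+q+s}-w_{r+q}\right)^{*}\left(w_{r+q+t}-w_{r+q}\right)=F^{[r+q]}(s,t)$. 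This identity is purely algebraic and does not use $S$ at all.

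I do not expect a serious obstacle. The only points needing care are that \prettyref{eq:4-3} holds as an operator identity and not merely on the dense span, and that the shifted-increment construction applied to a kernel with a non-canonical factorization still reduces to the Gram expression. Both are settled by the remarks above.
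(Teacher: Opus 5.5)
Your proof is correct. The factorizations $\Theta^{[r]}_{s,t}=S_{r}\Theta_{s,t}S_{r}$ and $F^{[r]}(s,t)=w^{*}_{s}S^{2}_{r}w_{t}$ are obtained exactly as in the paper. The two remaining steps differ in technique.

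For monotonicity, the paper writes $S_{2r}-S_{2r+2q}=S_{2r}(I_{H_{F}}-S_{2q})$ and uses that a product of commuting positive operators is positive. You instead write $S_{2r+2q}=S_{r}S_{2q}S_{r}$ and use that conjugation by the self-adjoint $S_{r}$ preserves order. This is slightly more elementary, since it needs no square-root or commuting-product lemma, though both arguments rest on the same semigroup law.

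For $(F^{[r]})^{[q]}=F^{[r+q]}$, the paper computes $(w_{q+s}-w_{q})^{*}S^{2}_{r}(w_{q+t}-w_{q})=w^{*}_{s}S_{q}S^{2}_{r}S_{q}w_{t}=w^{*}_{s}S_{2r+2q}w_{t}$, which invokes the cocycle identity a second time. Your telescoping $u_{q+t}-u_{q}=w_{r+q+t}-w_{r+q}$ shows instead that the identity is a formal property of the second-difference operator in \prettyref{eq:4-5}, true for any kernel. So the semigroup is really needed only for the inequalities. The paper's route has the mild advantage of displaying the result directly in the form $w^{*}_{s}S_{2(r+q)}w_{t}$.

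Finally, your boundedness argument for \prettyref{eq:4-3} is harmless but unnecessary. Each $w_{t}h$ is itself a generator, so $S_{r}w_{t}h=w_{r+t}h-w_{r}h$ holds for every $h\in K$ directly from \prettyref{eq:4-2}.
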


\begin{proof}
By \prettyref{thm:4-2}, 
\[
w_{r+s}-w_{r}=S_{r}w_{s}
\]
for all $r,s\geq0$. Hence 
\[
\begin{aligned}\Theta^{[r]}_{s,t} & =\left(w_{r+s}-w_{r}\right)\left(w_{r+t}-w_{r}\right)^{*}\\
 & =S_{r}w_{s}w^{*}_{t}S_{r}\\
 & =S_{r}\Theta_{s,t}S_{r}.
\end{aligned}
\]
Similarly, 
\[
\begin{aligned}F^{[r]}\left(s,t\right) & =\left(w_{r+s}-w_{r}\right)^{*}\left(w_{r+t}-w_{r}\right)\\
 & =\left(S_{r}w_{s}\right)^{*}S_{r}w_{t}\\
 & =w^{*}_{s}S^{2}_{r}w_{t}.
\end{aligned}
\]

Since $\left(S_{r}\right)_{r\geq0}$ is a semigroup of commuting positive
contractions, 
\[
S_{2r}-S_{2r+2q}=S_{2r}\left(I_{H_{F}}-S_{2q}\right)\geq0.
\]
Therefore 
\[
F^{[r]}\left(s,t\right)-F^{[r+q]}\left(s,t\right)=w^{*}_{s}\left(S_{2r}-S_{2r+2q}\right)w_{t}
\]
is a positive kernel. Taking $r=0$ gives $F^{[0]}=F$, and hence
\[
0\leq F^{[r+q]}\leq F^{[r]}\leq F.
\]

Finally, 
\[
\begin{aligned}\left(F^{[r]}\right)^{[q]}\left(s,t\right) & =F^{[r]}\left(q+s,q+t\right)-F^{[r]}\left(q+s,q\right)\\
 & \quad-F^{[r]}\left(q,q+t\right)+F^{[r]}\left(q,q\right)\\
 & =\left(w_{q+s}-w_{q}\right)^{*}S^{2}_{r}\left(w_{q+t}-w_{q}\right)\\
 & =\left(S_{q}w_{s}\right)^{*}S^{2}_{r}S_{q}w_{t}\\
 & =w^{*}_{s}S_{q}S^{2}_{r}S_{q}w_{t}\\
 & =w^{*}_{s}S_{2r+2q}w_{t}\\
 & =F^{[r+q]}\left(s,t\right).
\end{aligned}
\]
\end{proof}

Thus the two covariance kernels from \prettyref{sec:3} have tangent
counterparts: $F\left(s,t\right)=w^{*}_{s}w_{t}$ on $K$, and $\Theta_{s,t}=w_{s}w^{*}_{t}$
on $H_{F}$. The semigroup $\left(S_{r}\right)_{r\geq0}$ transports
the hidden covariance and gives the monotone family of visible increment
kernels.

\bibliographystyle{amsalpha}
\bibliography{ref}

\end{document}